\documentclass[11pt]{amsart}    
\usepackage{graphicx}

\usepackage{amsmath,amssymb,latexsym} 
\usepackage{graphicx}
\usepackage{centernot}
\usepackage{fullpage}
\usepackage[square, numbers]{natbib}   
\usepackage{comment}
\usepackage{mathrsfs}

\usepackage{enumitem}

\newtheorem{theorem}{Theorem} 

\newtheorem{alphtheorem}{Theorem}

\newtheorem{lemma}{Lemma}
\theoremstyle{definition}

\newtheorem{conjecture}{Conjecture}
\theoremstyle{remark}

\def\Z{\mathbb{Z}}

\def\F{\mathcal{F}}

\def\zero{\boldsymbol{0}}

\def\ds{\displaystyle}
\def\KG{\operatorname{KG}}

\def\cd{\operatorname{cd}}
\def\ecd{\operatorname{ecd}}

\def\alt{\operatorname{alt}}

\def\HH{\mathcal{H}}

\def\sd{\operatorname{sd}}

\title{Coloring Properties of Categorical Product of General  Kneser Hypergraphs}
\author{Roya Abyazi Sani}
\author{Meysam Alishahi}
\author{Ali Taherkhani}

\address{R. Abyazi Sani, 
School of Mathematical Sciences,
Shahrood University of Technology, Shahrood, Iran}
\email{r.abyazi@gmail.com}
\address{M. Alishahi, 
School of Mathematical Sciences,
Shahrood University of Technology, Shahrood, Iran}
\email{meysam\_alishahi@shahroodut.ac.ir}
\address{A. Taherkhani, 
Department of Mathematics, Institute for Advanced Studies in Basic Sciences (IASBS), Zanjan 45137-66731, Iran}
\email{ali.taherkhani@iasbs.ac.ir}

\begin{document}
\maketitle

\begin{abstract} 
\noindent 
More than 50 years ago Hedetniemi conjectured that the chromatic number of categorical product of two graphs is equal to the minimum of their chromatic numbers.
This conjecture has received a considerable attention in recent years.  Hedetniemi's conjecture were generalized to hypergraphs by Zhu in 1992.
Hajiabolhassan and Meunier (2016) introduced the first nontrivial lower bound for the chromatic number of categorical product of general Kneser hypergraphs and  using 
this lower bound, they verified Zhu's conjecture for some families of hypergraphs. In this paper, we shall present some colorful type results for the coloring of categorical product of general Kneser hypergraphs, which generalize  the Hajiabolhassan-Meunier result. 
Also, we present a new lower bound for the chromatic number of categorical product of general Kneser hypergraphs which can be extremely better than the Hajiabolhassan-Meunier lower bound.
Using this lower bound, we enrich the family of hypergraphs satisfying Zhu's conjecture. \\

\noindent {\bf Keywords:}\ {categorical product,   chromatic number, Hedetniemi's conjecture, general Kneser hypergraph.}\\
{\bf Subject classification: 05C15}
\end{abstract}

\section{\bf Introduction and Main Results}
For two graphs $G$ and $H$, their categorical product $G\times H$ is the  graph defined 
on the vertex set $V(G)\times V(H)$ such that two vertices $(g,h)$ and $(g',h')$ are adjacent whenever 
$gg'\in E(G)$ and $hh'\in E(H)$. The categorical product is the product involved in 
the famous long-standing conjecture posed by Hedetniemi. 
Hedetniemi's conjecture states that the chromatic number of $G\times H$ is equal to 
the minimum of $\chi(G)$ and $\chi(H)$. It was shown that the conjecture is true for several families 
of graphs but it is wide open (see, Tardif~\cite{MR2445666} and Zhu~\cite{MR1609464}). 
In spite of being investigated in several articles, there is no fascinating progress in solving 
this conjecture. This conjecture was generalized to the case of hypergraphs in~\cite{MR1206546}. 

A hypergraph $\HH$ is an ordered pair $(V(\HH),E(\HH))$  where  $V(\HH)$ is a set of vertices, and $E(\HH)$ is a family of nonempty subsets of $V(\HH)$.
The elements of $E(\HH)$ are called hyperedges. A hypergraph $\HH$ is said to be $r$-uniform if $E(\HH)$  is a family of distinct $r$-subsets of $V(\HH)$. In particular, a $2$-uniform hypergraph is   called a graph. An $r$-uniform hypergraph $\F$ is called {\it a complete $r$-partite hypergraph} if $V(\F)$ 
can be partitioned into $r$ parts (subsets) $V_{_{1}},\ldots,V_r$ such that 
the edge set of $\F$ is the set of all $r$-subsets of $V$ intersecting each part $V_{_{i}}$ in 
exactly one vertex.  The hypergraph $\F$ is said to be 
{\it balanced} if  $|V_{_{i}}|-|V_{_{j}}|\leq 1$ for each $i,j\in[r]$. Also, for an $r$-uniform hypergraph $\F$ and pairwise disjoint subsets $U_{_{1}},\ldots,U_r\subseteq V(\F)$, 
the hypergraph $\F[U_{_{1}},\ldots,U_r]$ is defined to be a subhypergraph of 
$\F$ whose vertex set  is $\bigcup\limits_{i=1}^r U_{_{i}}$ and  whose edge set consists of all hyperedges of $\F$ which have exactly one  element in each $U_{_{i}}$.

{\it A proper coloring} of a hypergraph $\HH$ is  an assignment of colors to vertices of $\HH$ such that there is no monochromatic hyperedge. 
The chromatic number of a hypergraph $\HH$,  denoted by $\chi(\HH)$, is the smallest number $k$ such that there exists  a proper coloring  of $\HH$ with $k$ colors. 
If there is no such a $k$, we define the chromatic number to be infinite. 
Let $c$ be a proper coloring of a complete $r$-partite hypergraph $\F$
with parts $V_{_{1}},\ldots,V_r$. The hypergraph $\F$ is {\it colorful} 
(with respect to the coloring $c$)
whenever for each $i\in [r]$, the vertices in $V_{_{i}}$ receive different colors, that is, 
$|c(V_{_{i}})|=|V_{_{i}}|$ for each $i\in [r]$. 

Let $\HH_{_{1}}=(V_{_{1}},E_{_{1}})$ and $\HH_{_{2}}=(V_{_{2}},E_{_{2}})$ be two hypergraphs. 
For $i=1,2$, the projection $\pi_{_{i}}$ is defined by $\pi_{_{i}}(v_{_{1}},v_{_{2}})\mapsto v_{_{i}}$.
The categorical product of two hypergraphs $\HH_{_1}$ and $\HH_{_2}$ is the hypergraph $\HH_{_{1}}\times \HH_{_{2}}$ with vertex set $V_{_{1}}\times V_{_{2}}$ and hyperedge set
$$\{e\subseteq V_{_{1}}\times V_{_{2}}\colon \pi_{_{1}}(e)\in E_{_{1}}, \pi_{_{2}}(e)\in E_{_{2}}\}.$$
The categorical product of two hypergraphs is defined by D{\"o}rfler and Waller~\cite{Dörfler1980} in 1980. 
Zhu~ \cite{MR1206546} proposed the following conjecture as a generalization of Hedetniemi's conjecture in 1992.
\begin{conjecture}\label{conj}
Let $\HH_{_{1}}=(V_{_{1}},E_{_{1}})$ and $\HH_{_{2}}=(V_{_{2}},E_{_{2}})$ be two hypergraphs. Then 
$$\chi(\HH_{_{1}}\times\HH_{_{2}})=\min\{\chi(\HH_{_{1}}),\chi(\HH_{_{2}})\}.$$
\end{conjecture}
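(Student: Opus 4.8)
The plan is to prove the two inequalities $\chi(\HH_1\times\HH_2)\le\min\{\chi(\HH_1),\chi(\HH_2)\}$ and $\chi(\HH_1\times\HH_2)\ge\min\{\chi(\HH_1),\chi(\HH_2)\}$ separately. The first is a soft, purely combinatorial fact that holds for all hypergraphs, and I would dispatch it quickly. The second is the entire substance of the conjecture, and — as I explain below — is where every difficulty concentrates.

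For the upper bound I would pull a proper coloring of a factor back along the corresponding projection. Assume without loss of generality that $\chi(\HH_1)\le\chi(\HH_2)$ and let $c_1$ be a proper coloring of $\HH_1$ using $\chi(\HH_1)$ colors. Define $c\colon V_1\times V_2\to[\chi(\HH_1)]$ by $c=c_1\circ\pi_1$, that is $c(v_1,v_2)=c_1(v_1)$. Given any hyperedge $e$ of $\HH_1\times\HH_2$, by definition $\pi_1(e)\in E_1$, so the set of colors that $c$ assigns to $e$ is exactly $c_1(\pi_1(e))$; since $c_1$ is proper and $\pi_1(e)$ is a hyperedge of $\HH_1$, this color set has at least two elements, so $e$ is not monochromatic. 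Hence $c$ is a proper coloring of the product with $\min\{\chi(\HH_1),\chi(\HH_2)\}$ colors, which yields the inequality.

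The reverse inequality is the hard direction. The natural strategy is contrapositive: assuming a proper coloring $c$ of $\HH_1\times\HH_2$ with strictly fewer than $t\isdef\min\{\chi(\HH_1),\chi(\HH_2)\}$ colors, one would like to manufacture from $c$ a proper coloring of $\HH_1$ or of $\HH_2$ with fewer than $t$ colors, contradicting the definition of $t$. The hard part will be exactly this descent: a coloring of the product genuinely entangles the two coordinate structures, and there is no elementary device that decouples them. In fact, for graphs the analogous lower bound is now known to fail (Shitov), so no unconditional argument of this shape can exist; the statement as displayed should be read as a conjecture rather than a theorem.

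Consequently I would not aim for an unconditional proof but for the bound on structured families where an \emph{external} certificate of high chromatic number is available. For general Kneser hypergraphs this is the role played by topological lower bounds of Dol'nikov / $\Z_p$-Tucker type: rather than decoupling the coordinates combinatorially, one transfers a topological obstruction from the factors to the product and reads off a lower bound on $\chi(\HH_1\times\HH_2)$ that meets $\min\{\chi(\HH_1),\chi(\HH_2)\}$ on these families, with colorful configurations in a balanced complete $r$-partite subhypergraph serving as the transferred certificate. The main obstacle, then, is not a calculation but the absence of any coordinate-decoupling mechanism in the lower-bound direction, which is precisely what forces the restriction to hypergraphs whose chromatic numbers are certified topologically rather than combinatorially.
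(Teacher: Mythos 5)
The statement you were given is Zhu's conjecture, which the paper records as an open conjecture and does not prove; the only proof content the paper attaches to it is the one-line remark that a proper coloring of a factor pulls back to a proper coloring of the product, and your projection argument $c=c_{_{1}}\circ\pi_{_{1}}$ is exactly that observation, carried out correctly. Your assessment of the reverse inequality is also the right one: it is the open substance of the conjecture (indeed false in general after Shitov's refutation of Hedetniemi's conjecture, which Zhu's conjecture generalizes), and the paper's actual contribution is precisely the kind of topological lower bound for Kneser-type families that you describe, so there is nothing further to compare.
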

One can easily derive a proper  coloring of $\HH_{_{1}}\times \HH_{_{2}}$ from a proper  coloring of $\HH_{_{1}}$ or of $\HH_{_{2}}$. Therefore the  hard part is  to show  that 
$\chi(\HH_{_{1}}\times\HH_{_{2}})\geq\min\{\chi(\HH_{_{1}}),\chi(\HH_{_{2}})\}.$
Let $\F$ be  a subhypergraph of $\HH_{_{1}}\times \HH_{_{2}}$ with the same vertex set and whose edge set consists of minimal hyperedges of $\HH_{_{1}}\times \HH_{_{2}}$. 
It is clear that any proper coloring of $\F$ is also a proper coloring of $\HH_{_{1}}\times \HH_{_{2}}$. 
 This observation shows that Conjecture~\ref{conj} is a generalization of Hedetniemi's conjecture.

For an integer $r$ and a hypergraph $\HH$, the {\it  $r$-colorability defect of $\HH$}, 
denoted by $\cd^{^r}(\HH)$, is the minimum number of vertices that should be removed from $\HH$ 
so that the induced hypergraph by the remaining vertices admits a proper coloring with $r$ colors. 

Let $\Z_r=\{\omega,\omega^{^2},\ldots,\omega^{^r}\}$ be a 
multiplicative cycle group of order $r$ with generator $\omega$. For  
$X=(x_{_{1}},\ldots,x_{_{n}})\in(\Z_r\cup\{0\})^{^{n}}$, a sequence $x_{_{i_{_{1}}}},\ldots,x_{_{i_{_m}}}$ 
with $1\leq i_{_{1}}<\cdots<i_{_m}\leq n$ is called an {\it  alternating subsequence of $X$} if $x_{_{i_j}}\neq 0$ 
for each $j\in[m]$ and $x_{i_j}\neq x_{_{i_{j+1}}}$ for each $j\in[m-1]$. 
The {\it  alternation number of $X$}, denoted by $\alt(X)$, is the length of the longest 
alternating subsequence of $X$. We set $\zero=(0,\ldots,0)$ and define $\alt(\zero)=0$. 
Also, for such  an $X=(x_{_{1}},\ldots,x_{_{n}})\in(\Z_r\cup\{0\})^{^{n}}$ and for  $\varepsilon\in \Z_r$, 
define $X^{^{\varepsilon}}=\left\{i:\; x_{_{i}}=\varepsilon\right\}.$ 
Note that the $r$-tuple $\big(X^\varepsilon\big)_{\varepsilon\in\Z_r}$ uniquely determines $X$ and vice versa. 
Therefore, with abuse of notations, we can write $X=\big(X^\varepsilon\big)_{\varepsilon\in\Z_r}$.  

For a hypergraph $\HH$ and a bijection $\sigma:[n]\longrightarrow V(\HH)$, the
{\it  $r$-alternation number of $\HH$ with respect to the permutation $\sigma$} is defined as follows:
$$\alt^{^r}_{_\sigma}(\HH)=\max\left\{\alt(X):\; E(\HH[\sigma(X^{^{\varepsilon}})])=\varnothing\mbox{ 
for all }\varepsilon \in\Z_p\right\}.$$ 
The {\it  $r$-alternation number} of $\HH$, denoted by $\alt^{^r}(\HH)$, is equal to 
$\min\limits_{\sigma}\alt^{^r}_{_\sigma}(\HH)$ where the minimum is taken over all bijections 
$\sigma:[n]\longrightarrow V(\HH)$ (for more details see \cite{MR3383256}). 

For any hypergraph $\HH=(V(\HH),E(\HH))$ and positive integer $r\geq 2$, the general Kneser hypergraph $\KG^r(\HH)$
 is an $r$-uniform hypergraph whose vertex set is $E(\HH)$ and whose hyperedge set   is the set of all  $r$-subsets of $E(\HH)$ containing $r$ 
 pairwise disjoint hyperedges of $\HH$. 
 Note that by this notation the well-known Kneser hypergraph $\KG^r(n,k)$ is
 the Kneser hypergraph $\KG^r\left([n],{[n]\choose k}\right)$. For $r=2$, we would rather use $\KG(\HH)$ than $\KG^r(\HH)$. 

Lov{\'a}sz in 1978, by using  tools from algebraic topology, proved that  $\chi(\KG(n,k))=n-2k+2$. 
His paper  showed an inspired and depth application of algebraic topology in combinatorics ~\cite{MR514625}.
 As a generalization of this result and to confirm a conjecture of Erd{\"o}s \cite{MR0465878},
 Alon, Frankl, and Lov{\'a}sz~\cite{MR857448} proved that the chromatic number of $\KG^r{(n,k)}$ is equal to $\lceil\frac{n-(k-1)r}{r-1}\rceil.$
A different kind of generalization of  Lov{\'a}sz's theorem has been obtained by Dol'nikov~\cite{MR953021}. He proved that  
$$\chi({\rm KG}({\mathcal H}))\geq {\cd^{^2}({\mathcal H})}.$$ 
Then, in 1992, K{\v{r}}{\'{\i}}{\v{z}}~\cite{MR1081939}
extended the both results by   Alon, Frankl, and 
Lov\'asz~\cite{MR857448} and  Dol'nikov~\cite{MR953021} by proving that
$$\chi({\rm KG}^r({\mathcal H}))\geq \left\lceil{\cd^{^r}({\mathcal H})\over r-1}\right\rceil.$$ 

Alishahi and Hajiabolhassan~\cite{MR3383256} introduced the alternation number as an 
improvement of colorability defect. 
They proved  that
$$\chi({\rm KG}^r({\mathcal H}))\geq \left\lceil{|V(\HH)|-\alt^{^r}({\mathcal H})\over r-1}\right\rceil.$$

It can be  verified that $|V(\HH)|-\alt^{^r}({\mathcal H})\geq\cd^{^r}(\HH)$ and the inequality is 
often strict~\cite{MR3383256}. Therefore, the preceding lower bound for chromatic number surpasses the 
Dol'nikov-K{\v{r}}{\'{\i}}{\v{z}} lower bound. 
Recently, Hajiabolhassan and Meunier~\cite{HaMe16} extended the  
Alishahi-Hajiabolhassan result (as well as the Dol'nikov-K{\v{r}}{\'{\i}}{\v{z}} result) to 
the categorical product of general Kneser hypergraphs as follows. 
\begin{alphtheorem}{\rm\cite{HaMe16}}\label{thm:lowerhajimeun}
Let $\HH_{_{1}},\ldots,\HH_{_{t}}$ be hypergraphs and $r$ be an integer, where $r\geq 2$.
Then 
$$\chi(\KG^r(\HH_{_{1}})\times\cdots\times\KG^r(\HH_{_{t}}))\geq\left\lceil{1\over r-1}\min\limits_{i\in[t]}
\left(|V(\HH_{_{i}})|-\alt^{^r}(\HH_{_{i}})\right)\right\rceil.$$
\end{alphtheorem}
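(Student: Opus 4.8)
\noindent The plan is to argue by contradiction through a single application of the $\Z_r$-Tucker lemma on the join of the factors. Write $n_i=|V(\HH_i)|$ and $a_i=\alt^r(\HH_i)$, set $C=\min_{i\in[t]}(n_i-a_i)$ and $N=\sum_{i=1}^t n_i$, and suppose that $\KG^r(\HH_1)\times\cdots\times\KG^r(\HH_t)$ admits a proper coloring $c$ with $m$ colors such that $(r-1)m<C$; I must derive a contradiction. After the standard reduction to the case where $r$ is prime (as in the single-hypergraph bound of Alishahi--Hajiabolhassan), I fix for each $i$ a bijection $\sigma_i:[n_i]\to V(\HH_i)$ realizing $\alt^r_\sigma(\HH_i)=a_i$, and let $i^*$ attain the minimum defining $C$. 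The goal is to build a $\Z_r$-equivariant map
\[
\lambda=(\lambda_1,\lambda_2):(\Z_r\cup\{0\})^N\setminus\{\zero\}\longrightarrow \Z_r\times[m_1+m],\qquad m_1=N-C,
\]
satisfying the hypotheses of the $\Z_r$-Tucker lemma; its conclusion $N\le m_1+(r-1)m$ then contradicts $(r-1)m<C=N-m_1$.

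I regard a point of the domain as a block vector $X=(X_1,\dots,X_t)$ with $X_i\in(\Z_r\cup\{0\})^{n_i}$. For the \emph{alternation zone}, labels $1,\dots,m_1$, I read the alternation of $X$ through the orderings $\sigma_i$ and record its length together with the sign of a canonically chosen extremal term; the budget $m_1=\sum_{i\ne i^*}n_i+a_{i^*}$ is designed so that the non-minimizing factors may be charged their full size while the minimizing factor is charged only its alternation number $a_{i^*}$. A point is sent to the \emph{color zone}, labels $m_1+1,\dots,m_1+m$, exactly when the blocks jointly exhibit a product Kneser structure: there is a common sign $\varepsilon\in\Z_r$ and hyperedges $A_i\subseteq\sigma_i(X_i^{\varepsilon})$ with $A_i\in E(\HH_i)$ for every $i\in[t]$; in that case I output $\lambda_2(X)=m_1+c(A_1,\dots,A_t)$ and $\lambda_1(X)=\varepsilon$, using fixed tie-breaks (lexicographically least hyperedges, largest available color) so that $\lambda$ is $\Z_r$-equivariant, the identity $(\omega X)^\varepsilon=X^{\omega^{-1}\varepsilon}$ making this verification routine.

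The alternation-zone condition of the $\Z_r$-Tucker lemma holds because comparable points with equal alternation label share the canonical extremal sign. For the color zone I would show that properness of $c$ rules out the forbidden configuration: if $X^{(1)}\preceq\cdots\preceq X^{(r)}$ all carry the same color label $\gamma$ but pairwise distinct signs $\varepsilon_1,\dots,\varepsilon_r$, then in each factor $i$ the hyperedges $A_i^{(j)}\subseteq\sigma_i\big((X^{(j)}_i)^{\varepsilon_j}\big)\subseteq\sigma_i\big((X^{(r)}_i)^{\varepsilon_j}\big)$ are pairwise disjoint --- the $\varepsilon_j$ are distinct and the chain is nested, so these lie in distinct level sets of $X^{(r)}_i$ --- and hence form a hyperedge of $\KG^r(\HH_i)$. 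Consequently the $r$ tuples $\big(A_1^{(j)},\dots,A_t^{(j)}\big)$, $j\in[r]$, constitute a hyperedge of the categorical product (the correspondence $j\leftrightarrow\varepsilon_j$ furnishing the linking permutations) all of whose vertices receive color $\gamma$, a monochromatic hyperedge contradicting properness. With both hypotheses in force, the $\Z_r$-Tucker lemma yields the desired contradiction.

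The delicate point, and the true content of the theorem, is the simultaneous control of two competing requirements hidden in the construction: entering the color zone must force a hyperedge in \emph{every} factor at one \emph{common} sign $\varepsilon$ (so that a same-color chain produces a genuine product hyperedge rather than unrelated monochromatic tuples in separate factors), while the alternation zone must consume no more than $N-C$ labels, that is, the non-minimizing factors must be absorbed entirely into the alternation zone and only the surplus $n_{i^*}-a_{i^*}=C$ of the minimizer may reach the color zone. These pull against each other: the obvious rule ``pass to the color zone as soon as some sign saturates all factors'' lets the non-minimizing blocks accumulate too much alternation and overshoots the budget $m_1$. Getting the accounting right therefore requires processing the blocks in a prescribed order and choosing the overflow rule so that each non-minimizing factor becomes saturated (a hyperedge in every level set) precisely as its coordinates are exhausted, leaving the minimizer to supply the topological obstruction; carrying out this bookkeeping, and rechecking equivariance and the two Tucker conditions under it, is where I expect the main effort to go.
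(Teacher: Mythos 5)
Your overall architecture (equivariant map with an ``alternation zone'' and a ``color zone'', Tucker/Dold-type obstruction, reduction to prime $r$) is the right one and matches the strategy behind Theorem~\ref{thm:lowerhajimeun} and Theorem~\ref{thm:colorfulalt}. Your color-zone argument is sound: nested $X^{(1)}\subseteq\cdots\subseteq X^{(r)}$ with equal color and pairwise distinct signs do yield, in each factor, $r$ pairwise disjoint hyperedges sitting in distinct level sets of $X^{(r)}$, hence a monochromatic hyperedge of the product. (Two small repairs there: the equivariant tie-break for the sign cannot be ``lexicographically least'' --- the set of maximizing signs transforms by the $\Z_p$-action, so you must choose orbit representatives, as the paper does with its maps $s_{_1},s_{_2},s_{_3}$; and the reduction to prime $r$ for \emph{products} is itself a lemma requiring the auxiliary hypergraph $\mathcal{T}_{\HH,C,s}$ and a subadditivity inequality, not just the single-hypergraph reduction.)

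The genuine gap is exactly the point you flag and defer: the claim that the alternation zone fits into $m_1=N-C$ labels. Your budget presumes that in every non-color-zone configuration some \emph{single} factor can be charged only its alternation number. But in the mixed case this fails: take $p=2$, $t=2$, with factor $1$ unsaturated only at the sign $+$ and factor $2$ unsaturated only at $-$. Then neither $X(1)$ nor $X(2)$ has empty level-set hyperedges at \emph{all} signs, so neither block's alternation number bounds anything directly; the natural charge (used in the paper's $\nu_{_j}$, namely $|A_{_j}(X)|$ plus the maximal alternation of an unsaturated subvector) gives at most $(a_1+1)+(a_2+1)$, which already exceeds $N-C=\max(n_1+a_2,\,n_2+a_1)$ when $n_i=a_i+1$. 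This is why the paper (following Hajiabolhassan--Meunier) takes $\alpha=N-C+p-1$, \emph{not} $N-C$, and why it cannot then afford one label per color in the second zone: with that enlarged budget a plain $\Z_p$-Tucker count only gives $m\geq (C-p+1)/(p-1)$. The surcharge of $p-1$ is recovered by indexing the color zone by the function $\ell(\tau(X))$ (which is always at least $p$ on $\Sigma_{_2}$) and converting $\ell(\tau(X))\geq C$ into the bound via the colorful balanced subhypergraph of Lemma~\ref{lem:colorful}. So either you must produce a genuinely new charging scheme achieving the budget $N-C$ for your (smaller) alternation zone --- which you have not done and which I do not believe the ``prescribed processing order'' will deliver in the mixed-saturation case above --- or you must adopt the paper's two-step accounting ($\alpha=N-C+p-1$ together with the $\ell$-indexed color zone and Lemma~\ref{lem:colorful}). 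As written, the contradiction $N\leq m_1+(r-1)m$ does not follow from the construction you describe.
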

Using Theorem~\ref{thm:lowerhajimeun}, Hajiabolhassan and Meunier introduced some new  families of hypergraphs satisfying Zhu's conjecture. 

From another point of view, Simonyi and Tardos~\cite{Simonyi&Tardos2007}  generalized the Dol'nikov result. Indeed, they proved   
 that for any hypergraph $\HH$, if $t=\cd^{^2}(\HH)$, then any proper coloring of $\KG(\HH)$ contains a complete bipartite subgraph 
$K_{\left\lfloor {t\over 2}\right\rfloor,\left\lceil {t\over 2}\right\rceil}$ such that
all vertices of this subgraph receive different colors and  these different 
colors occur alternating on the two parts 
of the bipartite graph with respect to their natural order. 
Then, this result as well as the Dol'nikov-K{\v{r}}{\'{\i}}{\v{z}} result was extended to Kneser hypergraphs by Meunier \cite{Meunier14}
as in the next theorem. A common generalization of the Simonyi-Tardos result and a result by Chen~\cite{MR2763055} can be found in~\cite{AliHajMeu2017}.

\begin{alphtheorem}\label{colorful1}
Let $\HH$ be a hypergraph and $p$ be a prime number.
Any proper coloring of $\KG^p(\HH)$ contains a 
colorful,  balanced, and complete $p$-partite subhypergraph $\F$ with  
$\cd^p(\HH)$ vertices.  
\end{alphtheorem}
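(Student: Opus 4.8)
The plan is to deduce the statement from the $\Z_p$-Tucker lemma, the $\Z_p$-equivariant combinatorial analogue of the Borsuk--Ulam theorem used by Alishahi--Hajiabolhassan and Meunier, in its structural (Ky Fan type) form. Write $V(\HH)=\{v_1,\dots,v_n\}$, set $d=\cd^p(\HH)$ and $\alpha=n-d$, and fix an arbitrary proper coloring $c\colon E(\HH)\to[C]$ of $\KG^p(\HH)$; recall that the vertices of $\KG^p(\HH)$ are the edges of $\HH$. I would work on the free $\Z_p$-poset $(\Z_p\cup\{0\})^n\setminus\{\zero\}$, ordered by $X\preceq Y$ iff $X^\varepsilon\subseteq Y^\varepsilon$ for every $\varepsilon\in\Z_p$, reading a nonzero $X=(X^\varepsilon)_{\varepsilon\in\Z_p}$ as a partial assignment of the vertices of $\HH$ to $p$ color classes $A^\varepsilon=\{v_i:\ i\in X^\varepsilon\}$.

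The heart is an equivariant labeling $\lambda=(\lambda_1,\lambda_2)\colon(\Z_p\cup\{0\})^n\setminus\{\zero\}\to\Z_p\times[\alpha+C]$ built from $c$, with two regimes. If no class $A^\varepsilon$ contains a hyperedge of $\HH$, then $\bigcup_\varepsilon A^\varepsilon$ induces a $p$-colorable subhypergraph, so $|\supp X|\le n-d=\alpha$; I assign a \emph{small} label $\lambda_2(X)\in[\alpha]$ that encodes the support (for instance $\lambda_2(X)=|\supp X|$) so that any two comparable assignments with a common small label are in fact equal, and let $\lambda_1(X)$ be the value of $X$ on the last coordinate of its support. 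Otherwise some class contains a hyperedge; among all pairs $(e,\varepsilon)$ with $e\in E(\HH)$ and $e\subseteq A^\varepsilon$, I pick the one whose color $c(e)$ is smallest (breaking ties by a fixed order on $E(\HH)$) and set $\lambda_2(X)=\alpha+c(e)$, a \emph{large} label, with $\lambda_1(X)=\varepsilon$. Replacing $X$ by $\omega X$ cyclically relabels the classes, leaving the chosen edge and its color unchanged while sending its class $\varepsilon\mapsto\omega\varepsilon$, so $\lambda$ is $\Z_p$-equivariant.

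Next I would check the two hypotheses of the $\Z_p$-Tucker lemma. On small labels the first condition holds trivially by the encoding above. The decisive point is the large labels: if there were comparable assignments $X^{(1)}\preceq\cdots\preceq X^{(p)}$ sharing a large label $\alpha+c$ and realizing the $p$ distinct signs $\varepsilon_1,\dots,\varepsilon_p$, then the selected edges $e_1,\dots,e_p$, with $e_j\subseteq A^{\varepsilon_j}(X^{(j)})\subseteq A^{\varepsilon_j}(X^{(p)})$ and the $\varepsilon_j$ pairwise distinct, would lie in pairwise disjoint classes of $X^{(p)}$, hence be $p$ pairwise disjoint hyperedges all of color $c$ — a monochromatic hyperedge of $\KG^p(\HH)$, contradicting properness of $c$. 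Thus this configuration is excluded, which is exactly the second hypothesis, and the lemma applies. Its structural form then produces a chain $X^{(1)}\preceq\cdots\preceq X^{(q)}$ whose labels realize the $\Z_p$-Fan pattern, the signs cycling through $\Z_p$ along strictly increasing labels. Since the small labels account for at most $\alpha=n-d$ of these steps, at least $d$ steps carry large labels, i.e. distinct colors distributed among the $p$ signs by the cyclic pattern. Grouping by sign the edges selected at these steps defines the parts $V_1,\dots,V_p$ of a subhypergraph $\F$: it is \emph{balanced} because the signs cycle in order, \emph{colorful} because the colors strictly increase along the chain, and \emph{complete $p$-partite} because any transversal consists of edges lying in pairwise disjoint classes of the top assignment $X^{(q)}$, hence pairwise disjoint and so a hyperedge of $\KG^p(\HH)$; keeping exactly $d$ of the large-label steps yields $\F$ on $\cd^p(\HH)$ vertices.

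I expect the main difficulty to be precisely this last extraction. The bare inequality form of the $\Z_p$-Tucker lemma yields only the Dol'nikov--K{\v{r}}{\'{\i}}{\v{z}} bound, so one must use (and perhaps re-establish in a tailored form) the finer Ky Fan type conclusion, and then verify that the selected edges genuinely assemble into a \emph{balanced} and \emph{complete} $p$-partite subhypergraph on exactly $\cd^p(\HH)$ vertices. Checking completeness — that every transversal is a Kneser hyperedge, via cross-class disjointness in the maximal assignment — together with the exact count and the equidistribution of the parts, and arranging the small-label encoding so that the $\Z_p$-Fan pattern reads off cleanly, is where the delicate bookkeeping lies.
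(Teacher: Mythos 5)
Your reduction to an equivariant labeling $\lambda\colon(\Z_p\cup\{0\})^n\setminus\{\zero\}\to\Z_p\times[\alpha+C]$ is sound as far as it goes: the bound $|\supp X|\le\alpha$ in the small regime, the equivariance of both regimes, and the check that $p$ comparable vectors sharing a large label and realizing $p$ distinct signs would force a monochromatic hyperedge of $\KG^p(\HH)$ are all correct. But what you have verified are exactly the hypotheses of the \emph{inequality} form of the $\Z_p$-Tucker lemma, which, as you concede, only yields the Dol'nikov--K\v{r}\'{\i}\v{z} bound $\chi(\KG^p(\HH))\ge\lceil\cd^p(\HH)/(p-1)\rceil$. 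The entire content of Theorem~\ref{colorful1} is delegated to a ``structural (Ky Fan type)'' strengthening that would produce a chain of length $n$ along which $\lambda_2$ strictly increases and $\lambda_1$ cycles through $\Z_p$ in order. No such statement is available for odd primes $p$: Fan's lemma and its use by Simonyi and Tardos are genuinely $\Z_2$ phenomena, and the known $\Z_q$-Fan-type results have conclusions of a different shape that do not deliver the cyclic chain your extraction needs. So the step you flag as ``the main difficulty'' is not delicate bookkeeping; it is an unproved lemma which, for $p>2$, is essentially equivalent in strength to the theorem itself. The downstream extraction (grouping selected edges by sign, disjointness across the classes of the top vector, distinctness of colors) is fine, but it rests on a foundation you have not supplied.

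The argument the paper relies on (Meunier's, reproduced in generalized form in the proof of Theorem~\ref{thm:colorful}, of which Theorem~\ref{colorful1} is the case $t=1$ since $\ecd^{^p}(\HH)\ge\cd^{^p}(\HH)$) avoids any Fan-type statement. In the large regime one does not select a single minimal-color edge: one records the whole incidence simplex $\tau(X)=\{(\varepsilon,c(e)): e\in E(\HH),\ e\subseteq X^{\varepsilon}\}\in\left(\sigma^{p-1}_{p-2}\right)^{*C}$ and labels $X$ by the pair $\left(s_{_3}(\overline{\tau(X)}),\,\alpha-p+1+\ell(\tau(X))\right)$, where $h(\tau)=\min_{\varepsilon}|\tau^{\varepsilon}|$, $\ell(\tau)=p\cdot h(\tau)+|\{\varepsilon:|\tau^{\varepsilon}|>h(\tau)\}|$, and $\overline{\tau}$ is the union of the minimal sign-classes. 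The function $\ell$ is engineered so that it is monotone and, along a containment with equal $\ell$-value, forces $\overline{\tau}$ to be constant; hence the \emph{inequality} form (Dold's theorem) suffices and yields $\max_X\ell(\tau(X))\ge\cd^{^p}(\HH)$. A single vector $X$ achieving this already contains a balanced system of (sign, color) pairs from which the colorful, balanced, complete $p$-partite subhypergraph is read off directly (Lemma~\ref{lem:colorful}); no chain with a prescribed sign pattern is ever produced. To repair your proposal you would need to replace your large-label rule by this $\ell(\tau(X))$ device; as written, the proposal has a genuine gap.
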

It should be mentioned that, in his paper~\cite{Meunier14}, Meunier also generalized Theorem~\ref{colorful1} and proved that this theorem remains true by 
replacing $\cd^p(\HH)$ with $|V(\HH)|-\alt^{^p}(\HH)$. Moreover, several extensions of this result were presented in~\cite{Alishahi2017}. 

As an improvement of $r$-colorability defect, the equitable $r$-colorability defect was introduced in~\cite{AbyAli2017}.  
For a hypergraph $\HH$, the {\it equitable $r$-colorability defect of $\HH$}, 
denoted by $\ecd^{^r}(\HH)$, is the minimum number of vertices which should be removed so that the 
induced subhypergraph by the remaining vertices admits an equitable $r$-coloring,
i.e., an $r$-coloring in which the sizes of color classes differ by at most $1$. 
Clearly, $\ecd^{^r}(\HH)\geq \cd^{^r}(\HH)$. As a generalization of Theorem~{\ref{colorful1}}, it was proved that 
any proper coloring of $\KG^p(\HH)$ contains a colorful,  balanced, and complete $p$-partite subhypergraph $\F$ with  
$\ecd^{^p}(\HH)$ vertices. 
It is~not difficult to construct  a hypergraph $\HH$, for 
which $\ecd^{^r}(\HH)-\cd^{^r}(\HH)$ is arbitrary large. 
Surpassing the Dol'nikov-K{\v{r}}{\'{\i}}{\v{z}} lower bound,
Abyazi Sani and Alishahi~\cite{AbyAli2017}  proved  
$$\chi({\rm KG}^r({\mathcal H}))\geq \left\lceil{\ecd^{^r}({\mathcal H})\over r-1}\right\rceil.$$ 
Furthermore, they compared this lower bound with the Dol'nikov-K{\v{r}}{\'{\i}}{\v{z}} lower bound
and Alishahi-Hajiabolhassan lower bound. 
In this regard, It was shown that there is  a family of hypergraphs $\mathscr{H}$
such that for each hypergraph $\HH\in\mathscr{H}$,  
$$\chi(\KG^r(\HH))=\left\lceil{\ecd^{^r}(\HH)\over r-1}\right\rceil,$$
while $\chi(\KG^r(\HH))-\left\lceil{\cd^{^r}(\HH)\over r-1}\right\rceil$ and $\chi(\KG^r(\HH))-\left\lceil{|V(\HH)|-\alt^{^r}(\HH)\over r-1}\right\rceil$ are 
both unbounded for the hypergraphs $\HH$ in $\mathscr{H}$. 

As the main results of this paper, motivated by the preceding discussion, 
we simultaneously extend the results by Abyazi Sani and Alishahi~\cite{AbyAli2017} and by 
Hajiabolhassan and Meunier~\cite{HaMe16} to the following theorems. 
\begin{theorem}\label{thm:colorful}
Let $\HH_{_{1}},\ldots,\HH_{_{t}}$ be hypergraphs. Let $p$ be a prime number and $\eta= \min\limits_{i\in[t]}\ecd^{^p}(\HH_{_{i}})$.
Any proper coloring of $\KG^p(\HH_{_{1}})\times\cdots\times\KG^p(\HH_{_{t}})$ contains a 
colorful,  balanced, and complete $p$-partite subhypergraph $\F$ with  
$\eta$ vertices.  
\end{theorem}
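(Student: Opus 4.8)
The plan is to prove Theorem~\ref{thm:colorful} by a single application of a $\Z_p$-Tucker lemma, upgrading the scheme behind Theorem~\ref{colorful1} in two independent directions at once: replacing $\cd^p$ by $\ecd^p$, which is what produces the \emph{balanced/equitable} conclusion, and passing from one hypergraph to the categorical product, which is what produces the $\min$. Fix once and for all a proper coloring $c$ of $\KG^p(\HH_1)\times\cdots\times\KG^p(\HH_t)$ with values in a finite totally ordered set of colors, and let $i_0$ be an index with $\ecd^p(\HH_{i_0})=\eta$. Write $n_i=|V(\HH_i)|$ and fix bijections $\sigma_i:[n_i]\to V(\HH_i)$ realizing the equitable colorability defect of each $\HH_i$. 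The governing topological principle is that the categorical product corresponds, at the level of configuration spaces, to the \emph{product} of the individual deleted $p$-spheres, and that the $\Z_p$-coindex of such a product equals the minimum of the coindices of the factors; this is precisely the mechanism that turns the per-factor quantities $\ecd^p(\HH_i)$ into $\eta=\min_i\ecd^p(\HH_i)$. Concretely I would work on the bottleneck sphere $(\Z_p\cup\{0\})^{n_{i_0}}\setminus\{\zero\}$ with its free diagonal $\Z_p$-action and build a single $\Z_p$-equivariant label $\lambda=(\lambda_1,\lambda_2)$, then invoke the $\Z_p$-Tucker lemma.

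The labeling is assembled in two regimes. In the \textbf{low regime} I assign, to a configuration $X$ all of whose sets $\sigma_{i_0}(X^{\varepsilon})$ are independent in $\HH_{i_0}$ and whose block sizes differ by at most one, a small label recording the number of used coordinates together with an alternation-type sign; the equitability constraint is exactly what caps the number of coordinates usable in this regime at $n_{i_0}-\ecd^p(\HH_{i_0})=n_{i_0}-\eta$, and it is this equitable cap (rather than the weaker independence cap behind $\cd^p$) that will force the output parts to be balanced. In the \textbf{high regime} I use the coloring $c$: for the smallest common sign $\varepsilon$ such that every factor carries an edge $e_i\in E(\HH_i)$ inside its $\varepsilon$-block, I set $\lambda_1=c(e_1,\ldots,e_t)$ and $\lambda_2=\varepsilon$. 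The decisive point is that the product vertex $(e_1,\ldots,e_t)$ is \emph{unchanged} by the diagonal $\Z_p$-shift: shifting sends the common sign $\varepsilon\mapsto\omega\varepsilon$ but carries the same edges along, so $\lambda_1$ is $\Z_p$-invariant while $\lambda_2$ is equivariant, exactly as the $\Z_p$-Tucker lemma requires. This ``common-sign product-vertex'' device is what lets a coloring of the product enter a single equivariant labeling, and it is the crux of the whole argument.

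I expect the \textbf{main obstacle} to be the coordination of the $t$ factors needed to make the high regime well defined at the right threshold. The bottleneck factor $\HH_{i_0}$ supplies the sphere and the value $\eta$, but forming $(e_1,\ldots,e_t)$ requires edges in \emph{all} factors at a common sign, and a priori the other factors need not have ``developed'' an edge when $\HH_{i_0}$ leaves its low regime. This is resolved by the fact that each $\ecd^p(\HH_i)\ge\eta$ for $i\ne i_0$, equivalently that the configuration space of each $\HH_i$ has $\Z_p$-coindex at least that of $\HH_{i_0}$: this gives, for each $i\ne i_0$, a $\Z_p$-equivariant map from the bottleneck sphere into the configuration space of $\HH_i$, along which the required common-sign edges appear in synchrony. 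Establishing that these induced configurations interact monotonically with the low-regime labeling, so that $\lambda$ satisfies the hypotheses of the $\Z_p$-Tucker lemma and the transition from low to high labels occurs exactly once $\eta$ coordinates remain, is the delicate technical heart; here the $\min$ enters genuinely as the coindex of the product, not as a naive per-factor count.

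Finally I would read off the conclusion. The $\Z_p$-Tucker lemma returns a family of configurations living in the high regime whose labels realize, for each of the $p$ signs, a collection of product vertices with pairwise distinct colors, and whose nested supports together with distinct signs guarantee that the corresponding edges are pairwise disjoint in \emph{every} coordinate. Grouping these product vertices into parts $W_1,\ldots,W_p$ by their sign, each transversal is a hyperedge of $\KG^p(\HH_1)\times\cdots\times\KG^p(\HH_t)$ (disjointness holds factorwise), each part $W_s$ is colorful because its labels are distinct colors of $c$, and the equitable cap from the low regime forces the parts to be balanced with total size $\eta$. This yields the desired colorful, balanced, complete $p$-partite subhypergraph $\F$ with $\eta$ vertices. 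Specializing to $t=1$ and weakening $\ecd^p$ to $\cd^p$ recovers Theorem~\ref{colorful1}, while discarding colorfulness and counting colors recovers the lower bound of Theorem~\ref{thm:lowerhajimeun}, confirming that Theorem~\ref{thm:colorful} is the common generalization claimed.
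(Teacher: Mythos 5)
Your proposal has a genuine architectural gap: the ``bottleneck sphere'' device. You propose to run the $\Z_p$-Tucker argument on $(\Z_p\cup\{0\})^{n_{i_0}}\setminus\{\zero\}$ for the single factor $\HH_{i_0}$ achieving the minimum, and to import the other factors via $\Z_p$-equivariant maps from this sphere into their ``configuration spaces,'' claiming these exist because $\ecd^p(\HH_i)\geq\eta$ is ``equivalent'' to a coindex inequality. It is not: $\ecd^p$ is only a combinatorial lower-bound parameter, and the inequality $\ecd^p(\HH_i)\geq\eta$ supplies no equivariant map under which ``common-sign edges appear in synchrony'' with the blocks of $X$. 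Without such maps the product vertex $(e_1,\ldots,e_t)$ in your high regime cannot be formed, and this is exactly the step where the $\min$ must be earned. The paper avoids the issue entirely by working on the join $(\Z_p\cup\{0\})^{n}\setminus\{\zero\}$ with $n=\sum_i n_i$, so that a single configuration $X$ carries a block $X(j)^{\varepsilon}$ for \emph{every} factor simultaneously; the minimum then emerges not from a coindex-of-products principle but from counting: the low-regime value $\nu(X)=\sum_j\nu_j(X)$ is bounded by $\alpha=n-\min_i\ecd^p(\HH_i)+p-1$ because at least one summand is deficient, and Dold's theorem forces $m\geq n$, pushing the deficit onto the high regime.

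There are three further concrete problems. First, your labeling does not cover the whole sphere: configurations whose $i_0$-blocks are independent but not balanced, or where only some factors carry edges at some signs, fall into neither your low nor your high regime; the paper handles these via the maximum over equitable independent sub-configurations $\bar X(j)$ inside $\nu_j$ and the case split between the sign maps $s_1$ and $s_2$. Second, choosing ``the smallest common sign $\varepsilon$'' is not a $\Z_p$-equivariant operation on a cyclic group (there is no equivariant selection of a distinguished element of a proper nonempty subset of $\Z_p$ by order), so your claimed equivariance of $\lambda_2$ fails; the paper instead uses an arbitrary equivariant map $s_3$ on the sub-simplex $\overline{\tau}$ of minimal-size parts. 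Third, recording only one color $c(e_1,\ldots,e_t)$ per configuration cannot yield the colorful balanced complete $p$-partite subhypergraph: the paper's high-regime label is $\ell(\tau(X))$ where $\tau(X)$ collects \emph{all} pairs $(\varepsilon, c(u))$ with $u$ a product vertex supported in the $\varepsilon$-blocks, and it is the inequality $\ell(\tau(X))\geq\eta$ for a single $X$ (Lemma~\ref{lem:colorful}) --- not a chain of configurations --- that produces $\F$. Your closing consistency checks are fine, but the proof as sketched would not go through without replacing the bottleneck sphere by the join and reworking the labeling along these lines.
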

\noindent{\bf Remark.} In the last section, we show that Theorem~\ref{thm:colorful} is true if we set 
$\eta= \min\limits_{i\in[t]}\left(|V(\HH_{_{i}})|-\alt^p(\HH_{_{i}})\right)$. Therefore, we have the same statement as in Theorem~\ref{thm:colorful} even if we set 
$$\eta= \max\Big\{\min\limits_{i\in[t]}\ecd^{^p}(\HH_{_{i}}),\, \min\limits_{i\in[t]}|V(\HH_{_{i}})|-\alt^{^p}(\HH_{_{i}})\Big\}.$$

Let $c$ be the proper coloring with color set $[C]$. Let $\F$ be the colorful,  balanced, and complete $p$-partite subhypergraph whose existence in ensured by Theorem~\ref{thm:colorful}.
Therefore, any color appears in at most
$p-1$ vertices of $\F$. Consequently, the previous theorem results in 
$$\chi(\KG^p(\HH_{_{1}})\times\cdots\times\KG^p(\HH_{_{t}}))\geq\left\lceil{1\over p-1}\min\limits_{i\in[t]}\ecd^{^p}(\HH_{_{i}})\right\rceil,$$ 
which can be extended for an arbitrary $r\geq 2$ as follows.
\begin{theorem}\label{thm:lower}
Let $\HH_{_{1}},\ldots,\HH_{_{t}}$ be hypergraphs and $r\geq 2$ be a positive integer, where $r\geq 2$.
Then 
$$\chi(\KG^r(\HH_{_{1}})\times\cdots\times\KG^r(\HH_{_{t}}))\geq\left\lceil{1\over r-1}\min\limits_{i\in[t]}\ecd^{^r}(\HH_{_{i}})\right\rceil.$$
\end{theorem}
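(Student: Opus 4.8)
The plan is to reduce the statement for an arbitrary $r\ge 2$ to the prime case, which is already available: combining Theorem~\ref{thm:colorful} with the counting argument recorded in the displayed inequality immediately preceding this theorem gives, for every \emph{prime} $p$,
\[
\chi\big(\KG^p(\HH_1)\times\cdots\times\KG^p(\HH_t)\big)\ \ge\ \Big\lceil \tfrac{1}{p-1}\min_{i\in[t]}\ecd^p(\HH_i)\Big\rceil .
\]
To exploit this I would first fix a prime adapted to $r$. By Dirichlet's theorem on primes in arithmetic progressions there are infinitely many primes $p\equiv 1 \pmod{r-1}$; I choose one and set $m=\tfrac{p-1}{r-1}$, so that $p-1=m(r-1)$. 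The point is to trade the (possibly composite) parameter $r$ for a prime $p$ of the same residue class, in such a way that the multiplicative factor $m$ produced by the reduction will cancel against $p-1=m(r-1)$ in the denominator of the prime bound.

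Next I would attach to each $\HH_i$ an auxiliary hypergraph $\HH_i^{(m)}$, an $m$-fold blow-up designed so that $p$-wise disjointness in $\HH_i^{(m)}$ reflects $r$-wise disjointness in $\HH_i$. The construction must be arranged to yield two properties. First, a \emph{coloring-transfer} property: a vertex map $\phi_i\colon E(\HH_i^{(m)})\to E(\HH_i)$ under which any $p$ pairwise disjoint hyperedges of $\HH_i^{(m)}$ map to a family containing $r$ pairwise disjoint hyperedges of $\HH_i$, so that $\phi_i$ is a homomorphism $\KG^p(\HH_i^{(m)})\to\KG^r(\HH_i)$. By functoriality of the categorical product, $\prod_i\phi_i$ is then a homomorphism of the products, and pulling a proper coloring back along it gives
\[
\chi\big(\KG^p(\HH_1^{(m)})\times\cdots\times\KG^p(\HH_t^{(m)})\big)\ \le\ \chi\big(\KG^r(\HH_1)\times\cdots\times\KG^r(\HH_t)\big).
\]
Second, a \emph{defect-scaling} property:
\[
\ecd^p\big(\HH_i^{(m)}\big)\ \ge\ m\cdot \ecd^r(\HH_i)\qquad\text{for every }i\in[t].
\]

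Granting these, the conclusion follows by chaining. Applying the prime bound to $\HH_1^{(m)},\dots,\HH_t^{(m)}$, then the coloring-transfer and defect-scaling properties, and finally $p-1=m(r-1)$, I obtain
\[
\chi\big(\KG^r(\HH_1)\times\cdots\times\KG^r(\HH_t)\big)\ \ge\ \Big\lceil \tfrac{1}{p-1}\min_{i\in[t]}\ecd^p(\HH_i^{(m)})\Big\rceil\ \ge\ \Big\lceil \tfrac{m\,\min_{i\in[t]}\ecd^r(\HH_i)}{m(r-1)}\Big\rceil\ =\ \Big\lceil \tfrac{1}{r-1}\min_{i\in[t]}\ecd^r(\HH_i)\Big\rceil,
\]
where I used that applying the same blow-up to every factor lets the minimum pass through the scaling, since $m>0$.

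I expect the main obstacle to be the defect-scaling inequality $\ecd^p(\HH_i^{(m)})\ge m\,\ecd^r(\HH_i)$. Unlike the ordinary colorability defect, the equitable defect constrains the sizes of the color classes, so the blow-up must be built so that an equitable $p$-coloring of the surviving part of $\HH_i^{(m)}$ descends to an equitable $r$-coloring of the corresponding part of $\HH_i$ with the number of deleted vertices scaling by exactly $m$; keeping this equitability bookkeeping consistent through the blow-up is the delicate point. The coloring-transfer property should follow directly once the blow-up is specified, as it only asks that $p$ pairwise disjoint hyperedges of $\HH_i^{(m)}$ cannot avoid projecting onto $r$ pairwise disjoint hyperedges of $\HH_i$. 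As an alternative route, one could bypass the reduction and reprove the bound directly for all $r$ by repeating the argument behind Theorem~\ref{thm:colorful} with a $\Z_r$-combinatorial (Tucker/Ky~Fan type) lemma in place of the prime-specific $\Z_p$ input, retaining only the chromatic conclusion rather than the full balanced colorful subhypergraph.
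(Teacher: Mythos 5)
Your overall strategy---reduce the composite case to the prime case, which is already settled by Theorem~\ref{thm:colorful} together with the counting argument preceding Theorem~\ref{thm:lower}---is the right one, and it is also what the paper does. But your proposed reduction has a genuine gap: the entire argument rests on an ``$m$-fold blow-up'' $\HH_i^{(m)}$ satisfying both the coloring-transfer property (a homomorphism $\KG^p(\HH_i^{(m)})\to\KG^r(\HH_i)$) and the defect-scaling property $\ecd^p(\HH_i^{(m)})\ge m\,\ecd^r(\HH_i)$, and you never construct it. This is not a routine omission. For the natural candidate---replace each vertex $v$ by $m$ copies and each hyperedge by all choices of copies---the coloring-transfer property is false: already for $m=2$, take $p$ hyperedges of $\HH_i$ that pairwise intersect, with each pairwise intersection occurring at a distinct vertex; then each vertex lies in at most two of them, so they lift to $p$ \emph{pairwise disjoint} hyperedges of $\HH_i^{(2)}$, yet their projections contain no two disjoint hyperedges of $\HH_i$, let alone $r$. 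So the object your proof needs does not obviously exist, and the defect-scaling inequality (which you yourself flag as the delicate point) is likewise unestablished. Your fallback suggestion of rerunning the topological argument with a $\Z_r$-input for composite $r$ also does not work as stated, since Dold's theorem requires a free action and $\Z_r$ for composite $r$ does not act freely on the relevant complexes---this is exactly why a reduction is needed in the first place.

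The paper's reduction is different and avoids these issues: it proves multiplicativity (Lemma~\ref{reduction}: if the bound holds for $r'$ and $r''$ then it holds for $r'r''$) via the auxiliary hypergraph $\mathcal{T}_{\HH,C,s}$ whose hyperedges are the vertex subsets $A$ with $\ecd^{s}(\HH[A])>(s-1)C$, together with the inequality $\ecd^{rs}(\HH)\leq r(s-1)C+\ecd^{r}(\mathcal{T}_{\HH,C,s})$; factoring $r$ into primes then finishes the proof. If you want to salvage your route, you would need to either produce an explicit blow-up with both properties (and prove them), or switch to a K\v{r}\'{\i}\v{z}-type auxiliary hypergraph as the paper does.
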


\noindent{\bf Example.} In what follows, by introducing some hypergraphs, we compare two lower bounds presented in Theorems~\ref{thm:lowerhajimeun} and \ref{thm:lower}.
Let $n,k,r$ and $a$ be positive integers, where $n\geq rk$, $n> a$ and $r\geq 2$. 
Define $\HH(n,k,a)$ to be a hypergraph with the vertex set $[n]$ and the edge set $$\big\{B\colon B\subseteq [n], |B|=k, \mbox{ and } B\not\subseteq [a]\big\}.$$
Let  $\KG^r(n,k,a)$ denote the hypergraph $\KG^r(\HH(n,k,a))$. It was proved in~\cite{AbyAli2017} that if either $a\leq 2k-1$ or $a\geq rk-1$, then 
$\chi\left(\KG^r(n,k,a)\right)=\left\lceil{n-\max\{a,k-1\}\over r-1}\right\rceil.$ 
Indeed, for $a\geq rk-1$, it was proved that $$\chi\left(\KG^r(\HH(n,k,a))\right)=\left\lceil{n-\max\{a,k-1\}\over r-1}\right\rceil=\left\lceil{\ecd^{^r}(\HH(n,k,a))\over r-1}\right\rceil.$$
One should notice that the chromatic number of $\KG^r(\HH(n,k,a))$ was left open for several values of $a$ with $2k\leq a\leq rk-2$. 
Note that Theorem~\ref{thm:lower} implies the validity of Zhu's conjecture for the family of hypergraphs $\KG^r(n,k,a)$ provided that $a\geq rk-1$.
What is interesting about the hypergraph $\KG^r(\HH(n,k,a))$ is the fact that for $r\geq 4$ and $a\geq rk-1$,  
the value of $\ecd^{^r}(\HH(n,k,a))-(n-\alt^{^r}(\HH(n,k,a)))$ is unbounded. 
Thus, by the lower bound presented in Theorem~\ref{thm:lowerhajimeun}, we cannot derive that the family of hypergraphs $\KG^r(n,k,a)$ satisfies Zhu's conjecture.
On the other hand, there is a family $\mathscr{H}$ of hypergraphs (see~\cite{AbyAli2017}) such that for $\HH\in\mathscr{H}$, the value of 
$(n-\alt^{^r}(\HH(n,k,a)))-\ecd^{^r}(\HH(n,k,a))$ is unbounded. 
Hence, Theorem~\ref{thm:lowerhajimeun} and Theorem~\ref{thm:lower} introduce two somehow complementary lower bounds.

\section{Proofs}
This section is devoted to the proofs of Theorem~\ref{thm:colorful} and Theorem~\ref{thm:lower}. 
In the first subsection, we define some necessary tools which will be needed in the rest of the paper.  Although we assume that the reader has the basic knowledge 
in topological combinatorics, for more details, one can see~\cite{MR1988723}. 
\subsection{\bf Notations and Tools}
{\it A simplicial complex} is a pair $(V, K)$ where $V$  is a finite nonempty set and $K$ is 
a family of nonempty subsets of $V$ such that for each $A\in K$, if 
$\varnothing\neq B\subseteq A$, then $B\in K$. 
Respectively, the set $V$ and the family $K$ are called the {\it vertex set } and {\it  simplex set} of the simplicial complex $(V,K)$. 
For simplicity of notation and since we can assume that $V=\cup_{_{A\in K}}A$, 
with no ambiguity, we can point to a simplicial complex $(V,K)$ just by its simplex set $K$. 

Let $V$ and $W$ be two sets. We write $V\uplus W$ for the set $V\times\{1\}\cup W\times\{2\}.$
Let $K$ and $L$ be two simplicial complexes with the vertex sets $V$ and $W$, respectively. 
We define $K*L$, the join of $K$ and $L$, to be a simplicial complex with the vertex set $V\uplus W$ and the simplex set
$\{A\uplus B:A\in K, B\in L\}.$
Also, we write $K^{*n}$ instead of the $n$-fold join of $K$. 
 
 Let $p$ be a prime number.  The simplicial complex 
$\sigma^{^{p-1}}_{_{p-2}}$ is a  simplicial complex with the vertex set 
$\Z_p$ and with the simplex set consisting of all nonempty and proper subsets of
$\Z_p$. 
Note that $\left(\sigma^{^{p-1}}_{_{p-2}}\right)^{*n}$ is a simplicial complex with the vertex set 
$\Z_p\times [n]$ and $\varnothing\neq \tau\subseteq\Z_p\times [n]$ is a simplex of $\left(\sigma^{^{p-1}}_{_{p-2}}\right)^{*n}$ if and only if $|\tau\cap(\Z_p\times \{i\})|\leq p-1$ for 
each $i\in[n]$. It is clear that $\left(\sigma^{^{p-1}}_{_{p-2}}\right)^{*n}$ is a free simplicial complex where for each $\varepsilon\in\Z_p$ and $(\varepsilon',i)\in \Z_p\times [n]$, the action is defined by 
$\varepsilon\cdot(\varepsilon',i)=(\varepsilon\cdot\varepsilon',i)$. 
Let  $\tau\in\left(\sigma^{^{p-1}}_{_{p-2}}\right)^{*n}$ be a simplex. 
For each  $\varepsilon\in\Z_p$, define 
$\tau^{^{\varepsilon}} =\{(\varepsilon, j) : (\varepsilon, j)\in \tau\}$.  Also, define 
$$\ell(\tau) = p\cdot h(\tau) + |\{\varepsilon \in \mathbb{Z}_p : |\tau^{^{\varepsilon}}| > h(\tau)\}|,$$ 
where $h(\tau) = \min \limits_{\varepsilon \in \mathbb{Z}_p} |\tau^{^{\varepsilon}}|$. 
Note that each $X\in (\Z_p\cup\{0\})^n\setminus\{\zero\}$ represents a simplicial complex in $\Z_p^{*n}\subseteq\left(\sigma^{^{p-1}}_{_{p-2}}\right)^{*n}$ and vice versa.
Therefore, speaking about $h(X)$ and $\ell(X)$ is meaningful. Indeed, we have 
$$h(X)=\min\limits_{\varepsilon\in\Z_p}|X^\varepsilon|\qquad \mbox{and}\qquad \ell(X)=p\cdot h(X)+|\{\varepsilon\colon |X^\varepsilon|>h(X)\}|.$$

\subsection{Proof of Theorem~\ref{thm:colorful}}\label{proofthm1}
For simplicity of notation, assume that $\HH_{_{1}}=([n_{_{1}}],E_{_{1}}), \ldots, \HH_{_{t}}=([n_{_{t}}],E_{_{t}})$ and moreover, set $n=\sum\limits_{i=1}^t n_{_{i}}$. 
For each $X\in(\Z_p\cup\{0\})^{^{n}}\setminus \{\zero\}$, let $X(1)\in(\Z_p\cup\{0\})^{n_{_{1}}}$ 
be the  first $n_{_{1}}$ coordinates of $X$, $X(2)\in(\Z_p\cup\{0\})^{n_{_{2}}}$ be the next 
$n_{_{2}}$ coordinates of $X$, and so on, up to $X(t)\in(\Z_p\cup\{0\})^{n_{_{t}}}$ be the last 
$n_{_{t}}$ coordinates of $X$. Also, for each $j\in[t]$, define  $A_{_{j}}(X)$ to be the set of signs 
$\varepsilon\in\Z_p$ such that $X(j)^{^{\varepsilon}}$ contains at least one edge of $\HH_{_{j}}$. 
We remind that $X(j)^{^{\varepsilon}}$ is the set 
of all $i\in[n_{_{j}}]$ such that $x_{i+\sum_{j'=1}^{j-1}n_{j'}}=\varepsilon$. 
Define
$$\Sigma_{_{1}}=\Big\{X\in (\Z_p\cup\{0\})^{^{n}}\setminus \{\zero\}:\; A_{_{j}}(X)\neq \Z_p\mbox{ for at least one } j\in[t]\Big\}$$
and
$$\Sigma_{_{2}}=\Big\{X\in (\Z_p\cup\{0\})^{^{n}}\setminus \{\zero\}:\; A_{_{j}}(X)=\Z_p\mbox{ for all } j\in[t]\Big\}.$$

In what follows, we define two sign maps playing important roles in the proof. \\

\noindent{\bf Definition of $s_{_1}(-)$.}
Let $X\in \Sigma_{_{1}}$ be a vector such that $A_{_{j}}(X)\in\{\varnothing, \Z_p\}$ for each $j\in [t]$.  
Define 
	$$B_{_{j}}(X)=\left\{
	\begin{array}{ll}
	X(j) & \mbox{ if } A_{_{j}}(X)=\Z_p, \\ \\ 
	\{\varepsilon:\; X(j)^{^{\varepsilon}}\neq\varnothing\} & \mbox{ if } A_{_{j}}(X)=\varnothing \mbox{ and } h(X(j)) = 0, \\ \\
	Z(j) & \mbox{ if } A_{_{j}}(X)=\varnothing \mbox{ and }  h(X(j)) > 0,
	\end{array}\right.$$
	where 
	$$Z(j)^{^{\varepsilon}} = \left\{
	\begin{array}{ll}
	X(j)^{^{\varepsilon}} & \mbox{ if } |X(j)^{^{\varepsilon}}|= h(X(j)),\\ \\
	\varnothing &  \mbox{otherwise.}
	\end{array}\right.$$

Now, 
set $B(X)=\big(B_{_{1}}(X),\ldots,B_{_{t}}(X)\big)$ and 
$$L_{_{1}}=\ds\Big\{
B(X):\; X\in \Sigma_{_{1}}\mbox{ and } A_{_{j}}(X)\in\{\varnothing, \Z_p\} \mbox{ for all } j\in [t]
\Big\}.$$
By the action $\varepsilon\cdot B(X)=B(\varepsilon\cdot X)$, 
 $\Z_p$ clearly acts freely on $L_{_{1}}$. Let $s_{_1}:L_{_{1}}\longrightarrow \Z_p$ be an arbitrary 
$\Z_p$-equivariant map. Note that such a map can be defined by choosing one representative 
in each orbit and defining the value of the function arbitrary on this representative. \\

\noindent{\bf Definition of $s_{_2}(-)$.}
Clearly $\Z_p$ acts freely on 
$$L_{_{2}}=2^{\Z_p}\times\cdots\times 2^{\Z_p}\setminus (\{\varnothing,\Z_p\}\times \cdots\times \{\varnothing,\Z_p\})$$ 
by the action $\varepsilon\cdot(C_{_{1}},\ldots,C_{_{t}})=(\varepsilon\cdot C_{_{1}},\ldots, \varepsilon\cdot C_{_{t}})$. Similar to the definition of $s_{_1}(-)$, let  $s_{_2}:L_{_{2}}\longrightarrow \Z_p$ be an arbitrary $\Z_p$-equivariant map.

\subsubsection{\bf Defining the map $\lambda_{_{1}}$.}

Set $\alpha=n-\min\limits_{i\in[t]}\ecd^{^p}(\HH_{_{i}})+p-1$. For every $j\in[t]$,  define the map $\nu_{_{j}}$ as follows:
$$\footnotesize{\nu_{_{j}}(X)=\left\{
	\begin{array}{ll}
	|X(j)| & \mbox{ if } A_{_{j}}(X)=\Z_p \\ \\ 
	|A_{_{j}}(X)|+
	\max\Big\{\ell\big(\bar{X}(j)\big):\; \bar{X}(j)\subseteq X(j) \mbox{ and } 
	E(\HH_{_{j}}[\bar{X}(j)^{^{\varepsilon}}])=\varnothing\mbox{ for all } \varepsilon\in\Z_p \big\} & \mbox{ if } A_{_{j}}(X)\neq \Z_p.  
	\end{array}\right.}$$
	Now, let $\nu(X) = {\sum\limits_{j= 1}^t \nu_{_{j}}(X)}$.

Define the map 
$$\begin{array}{llll}
\lambda_{_{1}}:& \Sigma_{_{1}} & \longrightarrow & \Z_p\times\left\{1,\ldots,\alpha\right\}\\
		& X & \longmapsto & (s(X), \nu(X)), 
\end{array}
$$   
For defining  $s(X)$, we consider the following different cases. 
\begin{itemize}
\item If for each $j\in[t]$, we have $A_{_{j}}(X)\in\{\varnothing, \Z_p\}$, then 
$s(X)=s_{_1}\big(B(X)\big)$. 

\item If for some $j\in[t]$, we have $A_{_{j}}(X)\not\in\{\varnothing, \Z_p\}$, then set $s(X)$ to be $s_{_2}\big(A_{_{1}}(X),\ldots,A_{_{t}}(X)\big)$. 
\end{itemize}
\begin{lemma}\label{lambda1}
The map $\lambda_{_{1}}$ is a $\Z_p$-equivariant map with no $X,Y\in \Sigma_{_{1}}$  
such that $X\subseteq Y$, $\nu(X)=\nu(Y)$ and $s(X)\neq s(Y)$. 
\end{lemma}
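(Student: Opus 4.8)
The plan is to verify the two properties of $\lambda_{_{1}}$ separately: first that it is $\Z_p$-equivariant, and second that the forbidden configuration $X\subseteq Y$ with $\nu(X)=\nu(Y)$ and $s(X)\neq s(Y)$ cannot occur. For equivariance, I would check that both coordinates $s(X)$ and $\nu(X)$ behave correctly under the $\Z_p$-action $\varepsilon\cdot X$. The second coordinate $\nu(X)$ should be \emph{invariant}: multiplying $X$ by $\varepsilon$ permutes the sign classes $X(j)^{^{\varepsilon'}}$ cyclically, so it permutes the sets $A_{_{j}}(X)$, preserves $|X(j)|$, preserves $|A_{_{j}}(X)|$, and (since the defining condition $E(\HH_{_{j}}[\bar X(j)^{^{\varepsilon}}])=\varnothing$ for all $\varepsilon$ is itself $\Z_p$-invariant) preserves the maximum of $\ell(\bar X(j))$; recall that $\ell$ depends only on the multiset of sizes $|\tau^{^{\varepsilon}}|$, which is unchanged by the action. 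So $\nu(\varepsilon\cdot X)=\nu(X)$. The first coordinate $s(X)$ should be genuinely equivariant, i.e.\ $s(\varepsilon\cdot X)=\varepsilon\cdot s(X)$; this follows because in the first case $s=s_{_1}(B(X))$ and $s_{_1}$ was chosen $\Z_p$-equivariant on $L_{_{1}}$ with $B(\varepsilon\cdot X)=\varepsilon\cdot B(X)$, while in the second case $s=s_{_2}(A_{_{1}}(X),\ldots,A_{_{t}}(X))$ and $s_{_2}$ was chosen equivariant with the action permuting each $A_{_{j}}$ coordinate correctly. The two cases are distinguished by whether some $A_{_{j}}(X)\notin\{\varnothing,\Z_p\}$, a condition invariant under the action, so the casework is consistent.

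For the second property, suppose toward a contradiction that $X\subseteq Y$, $\nu(X)=\nu(Y)$, but $s(X)\neq s(Y)$. The key observation is that $s$ is determined either by $B(X)$ or by the tuple $(A_{_{1}}(X),\ldots,A_{_{t}}(X))$, so $s(X)\neq s(Y)$ forces one of these data to differ between $X$ and $Y$. I would argue that under $X\subseteq Y$ together with the equality $\nu(X)=\nu(Y)$, these data are in fact forced to coincide, giving the contradiction. The containment $X\subseteq Y$ means each nonzero coordinate of $X$ agrees with $Y$, so $X(j)^{^{\varepsilon}}\subseteq Y(j)^{^{\varepsilon}}$ for every $j,\varepsilon$; in particular $A_{_{j}}(X)\subseteq A_{_{j}}(Y)$ and $\nu_{_{j}}(X)\le\nu_{_{j}}(Y)$ for each $j$. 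Since the total sums are equal, \emph{every} $\nu_{_{j}}(X)=\nu_{_{j}}(Y)$.

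The heart of the argument is a per-coordinate rigidity statement: $X\subseteq Y$ and $\nu_{_{j}}(X)=\nu_{_{j}}(Y)$ imply that the relevant sign data for index $j$ agree. Here I would split according to whether $A_{_{j}}(X)=\Z_p$. If $A_{_{j}}(Y)=\Z_p$ but $A_{_{j}}(X)\subsetneq\Z_p$, I expect equality of the two $\nu_{_{j}}$ values to be impossible because passing from $X$ to $Y$ strictly increases the contribution (from the ``alternation/$\ell$'' regime to the full $|Y(j)|$ regime); conversely if $A_{_{j}}(X)=A_{_{j}}(Y)=\Z_p$ then one shows $X(j)=Y(j)$, and if both are proper subsets one shows $A_{_{j}}(X)=A_{_{j}}(Y)$ and that the maximizing configurations match, so $B_{_{j}}(X)=B_{_{j}}(Y)$. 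Establishing this rigidity is the main obstacle: it requires a careful monotonicity analysis of the function $\ell$ under the containment $\bar X(j)\subseteq\bar Y(j)$, showing that any strict enlargement of a maximizing configuration that keeps all induced classes edge-free strictly raises $\ell$, so that equality of $\nu_{_{j}}$ forces the configurations to be identical. Once this is in place, both the $B(\cdot)$ data and the $A_{_{j}}(\cdot)$ tuples agree for $X$ and $Y$, hence $s(X)=s(Y)$ in either case, contradicting the assumption and completing the proof.
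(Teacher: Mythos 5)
Your overall route is the same as the paper's: equivariance is immediate from the equivariant choices of $s_{_1},s_{_2}$ and the invariance of $\nu$; then monotonicity of each $\nu_{_j}$ under $X\subseteq Y$ plus equality of the sums forces $\nu_{_j}(X)=\nu_{_j}(Y)$ for every $j$, and a per-coordinate analysis shows the data feeding $s_{_1}$ or $s_{_2}$ must coincide. The cases you dispose of cleanly ($A_{_j}(X)=A_{_j}(Y)=\Z_p$ giving $X(j)=Y(j)$; the cross-case being impossible; the $A_{_j}$-tuples agreeing when some $A_{_j}\notin\{\varnothing,\Z_p\}$) all match the paper.

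However, the step you yourself flag as the heart of the argument is resolved by a claim that is false. You propose to show that ``any strict enlargement of a maximizing configuration that keeps all induced classes edge-free strictly raises $\ell$, so that equality of $\nu_{_j}$ forces the configurations to be identical.'' The function $\ell$ is monotone but \emph{not} strictly monotone: for $p=2$, the configurations with class sizes $(1,3)$ and $(1,5)$ both have $h=1$ and $\ell=3$, so in the case $A_{_{j}}(X)=A_{_{j}}(Y)=\varnothing$ one can have $X(j)\subsetneq Y(j)$ with $\nu_{_j}(X)=\ell(X(j))=\ell(Y(j))=\nu_{_j}(Y)$. The configurations are therefore \emph{not} forced to be identical, and your argument stalls there. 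What saves the lemma — and what the paper actually proves — is that $\ell(X(j))=\ell(Y(j))$ determines $h$ and the number of classes exceeding $h$ (division with remainder by $p$, since that count is at most $p-1$); combined with $X(j)^{^{\varepsilon}}\subseteq Y(j)^{^{\varepsilon}}$ this forces the \emph{set} of classes exceeding $h$ to coincide, and hence the minimal-size classes themselves to coincide. Since $B_{_j}$ depends only on those minimal-size classes (when $h>0$) or on the support (when $h=0$), one gets $B_{_j}(X)=B_{_j}(Y)$ even though $X(j)$ and $Y(j)$ may differ. You need to replace your strict-monotonicity claim with this finer analysis of which part of the configuration $B_{_j}$ actually reads.
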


\begin{proof}
 Clearly, $\lambda_{_{1}}$ is a $\Z_p$-equivariant map since two maps $s_{_1}$ and $s_{_2}$ are $\Z_p$-equivariant.
 	For a contradiction, suppose that  $X$ and $Y$ are two vectors in $\Sigma_{_{1}}$ such that  $X \subseteq Y$, $\nu(X) = \nu(Y)$ and $s(X)\neq s(Y)$.
 	Note that each $\nu_{_j}$ is monotone, i.e., if $X\subseteq Y$, then $\nu_{_j}(X)\leq  \nu_{_j}(Y)$. Therefore, 
	we have  $X(j) \subseteq Y(j)$ and consequently, $A_{_{j}}(X) \subseteq A_{_{j}}(Y)$ and 
$$\begin{array}{lll}
~&\{\ell(\bar{X}(j)):\; \bar{X}(j)\subseteq X(j) \mbox{ and } E(\HH_{_{j}}[\bar{X}(j)^{^{\varepsilon}}])=\varnothing \quad \forall  \varepsilon\in\Z_p \}&\subseteq\\
 	~&\left\{\ell(\bar{Y}(j)):\; \bar{Y}(j)\subseteq Y(j) \mbox{ and }  	E(\HH_{_{j}}[\bar{Y}(j)^{^{\varepsilon}}])=\varnothing\quad\forall \varepsilon\in\Z_p \right\}.&
\end{array}$$ 
The equality $v(X) = v(Y)$ along with the above discussion implies $\nu_{_j}(X) = \nu_{_j}(Y)$ for each $j$ and consequently; $A_{_{j}}(X) = A_{_{j}}(Y)$.
This observation leads us to the following cases. \\
\begin{itemize}
\item[{\rm I)}] $A_{_{j}}(X) \in \{\varnothing, \Z_p\}$ for each $j$.\\ 
Therefore, $s(X)=s_{_1}\big(B(X)\big)$. Since $A_{_{j}}(X)= A_{_{j}}(Y)$ for each $j$,  we have
$s(Y)=s_{_1}\big(B(Y)\big).$  
Consequently,  the fact that $s(X)\neq s(Y)$ implies that $B(X)\neq B(Y)$.       
Now, let $j_{_0}$ be smallest integer for which $B_{_{j_{_0}}}(X)\neq B_{_{j_{_0}}}(Y)$. 
We consider the following different cases. \\
	\begin{itemize}
		\item[{\rm 1)}]  When $A_{_{{j_{_0}}}}(X) = A_{_{{j_{_0}}}}(Y)=\Z_p$.  In view of the definition of $B_{_{j_{_0}}}(-)$, we have $X({j_{_0}})\subsetneq Y({j_{_0}})$. Therefore, the 
				       definition of $\nu_{_{j_{_0}}}$ implies that $\nu_{_{j_{_0}}}(X)<\nu_{_{j_{_0}}}(Y)$, which is~not possible. \\
		\item[{\rm 2)}]  When $A_{_{j_{_0}}}(X) = A_{_{j_{_0}}}(Y)=\varnothing$. Using $\nu_{_{j_{_0}}}(X)=\nu_{_{j_{_0}}}(Y)$, we have 
		     	   	       $\ell(X({j_{_0}}))=\ell(Y({j_{_0}})).$ 
				       Therefore,\[
				        p\cdot h(X(j_{_0}))+|\{\varepsilon:\; |X(j_{_0})^{^{\varepsilon}}|>h(X(j_{_0}))\}|=
				       p\cdot h(Y(j_{_0}))+|\{\varepsilon:\; |Y(j_{_0})^{^{\varepsilon}}|>h(Y(j_{_0}))\}|,\]
				       which clearly implies that $h(X(j_{_0}))=h(Y(j_{_0}))$ and 
				       $$|\{\varepsilon:\; |X(j_{_0})^{^{\varepsilon}}|>h(X(j_{_0}))\}|=|\{\varepsilon:\; |Y(j_{_0})^{^{\varepsilon}}|>h(Y(j_{_0}))\}|.$$ 
				       The fact that 
				       $X(j_{_0})\subseteq Y(j_{_0})$ results in 
				       $$\{\varepsilon:\; |X(j_{_0})^{^{\varepsilon}}|>h(X(j_{_0}))\}=\{\varepsilon:\; |Y(j_{_0})^{^{\varepsilon}}|>h(Y(j_{_0}))\}.$$ 
				       Therefore, in view of the definition of 
				       $B(-)$, we have $B_{_{j_{_0}}}(X)=B_{_{j_{_0}}}(Y)$ which is a contradiction. 
	\end{itemize}

\item[{\rm II)}] $\varnothing \subsetneq  A_{_{j}}(X)\subsetneq \Z_p$ for some $j\in[t]$. Since $s(X)\neq s(Y)$, we have
$$s_{_2}\big(A_{_{1}}(X),\ldots,A_{_{t}}(X)\big)\neq s_{_2}\big(A_{_{1}}(Y),\ldots,A_{_{t}}(Y)\big).$$  
Consequently, we must have $(A_{_{1}}(X),\ldots,A_{_{t}}(X)\big)\neq(A_{_{1}}(Y),\ldots,A_{_{t}}(Y)\big).$ Therefore, there is at least one $j$ for which 
$A_j(X)\neq A_j(Y)$ which is~not possible. 
\end{itemize} 
 \end{proof}

\subsection{Defining the map $\lambda_{_{2}}$}
Let $c$ be a proper coloring of $\KG^r(\HH_{_{1}})\times\cdots\times \KG^r(\HH_{_{t}})$
with color set $[C]$.  
For each $X\in\Sigma_{_{2}}$ and each $\varepsilon\in\Z_p$, define 
$$E^{^{\varepsilon}}(X)=\Big\{(e_{_{1}},\ldots,e_{_{t}})\in E_{_{1}}\times\cdots\times E_{_{t}}:\; e_{_{j}}\subseteq X(j)^{^{\varepsilon}}\mbox{ for each }\varepsilon\in\Z_p\Big\}.$$
Note that, in view of the definition of $\Sigma_{_{2}}$, for each $\varepsilon\in\Z_p$, we have $E^{^{\varepsilon}}(X)\neq\varnothing$. 
Now, set 
$\tau{(X)}\in(\sigma_{_{p-2}}^{^{p-1}})^{*C}$ to be a simplex defined as follows: 
$$\tau{(X)}=\Big\{(\varepsilon, c(u)):\; u=(e_{_{1}},\ldots,e_{_{t}})\in E^{^{\varepsilon}}(X)\Big\}.$$
Since $c$ is a proper coloring and $E^{^{\varepsilon}}(X)\neq\varnothing$ for each 
$\varepsilon\in\Z_p$, one can check that $\tau{(X)}$ is a simplex in 
$(\sigma_{_{p-2}}^{^{p-1}})^{*C}$ with $h(\tau{(X)})>0$, and consequently, $\ell(\tau{(X)})\geq p$.

For a positive integer $b \in [C]$, let $U_{_{b}}$ be the set consisting of all simplices 
$\tau \in \left(\sigma^{^{p-1}}_{_{p-2}}\right)^{*C}$ such that $|\tau^{^{\varepsilon}}| \in \{0, b\}$ for each 
$\varepsilon \in \mathbb{Z}_p$. 
Define $U=\bigcup\limits _{b = 1}^C U_{_{b}}$. 
Choose an arbitrary $\mathbb{Z}_p$-equivariant map $s_{_3} : U \rightarrow \mathbb{Z}_p$. Also, for each $\tau \in \left(\sigma^{^{p-1}}_{_{p-2}}\right)^{*C}$ with 
$h=h(\tau)=\min |\tau^{^{\varepsilon}}|$, define 
$$\overline{\tau}=\bigcup\limits_{\varepsilon: |\tau^{^{\varepsilon}}|=h}\tau^{^{\varepsilon}}.$$
Note that $\overline{\tau}$ is a sub-simplex of $\tau$ which is in $U$. Therefore, $s_{_3}(\overline{\tau})$ is defined.

Define the map 
$$\begin{array}{llll}
\lambda_{_{2}}:& \Sigma_{_{2}} & \longrightarrow & \Z_p\times\left\{\alpha+1,\ldots, \alpha-p+1+\max\limits_{X \in \Sigma_{_{2}}}\ell(\tau(X))\right\}\\
		& X & \longmapsto & (s_{_3}(\overline{\tau{(X)}}), \alpha-p+1+\ell(\tau(X))). 
\end{array}
$$
\begin{lemma}\label{lambda2}
The map $\lambda_{_{2}}$ is a $\Z_p$-equivariant map with no $X,Y\in \Sigma_{_{2}}$  
such that $X\subseteq Y$, 
$\lambda_{_{2}}(X)=(\varepsilon,i)$ and $\lambda_{_{2}}(Y)=(\varepsilon',i)$, where $\varepsilon\neq \varepsilon'$. 

\end{lemma}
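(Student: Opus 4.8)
The plan is to verify the two asserted properties of $\lambda_{_2}$ separately, mirroring the structure of the proof of Lemma~\ref{lambda1}. First I would check $\Z_p$-equivariance. The map has two coordinates. The second coordinate is $\alpha-p+1+\ell(\tau(X))$; since the group action on $X$ permutes the blocks $X(j)^{^\varepsilon}$ by relabeling the signs $\varepsilon\mapsto\varepsilon\cdot\varepsilon'$, it likewise permutes the simplices $\tau(X)^{^\varepsilon}$, so $h(\tau(X))$ and hence $\ell(\tau(X))$ are invariant under the action. The first coordinate $s_{_3}(\overline{\tau(X)})$ is equivariant provided that $\overline{\varepsilon\cdot X}$ corresponds to $\varepsilon\cdot\overline{\tau(X)}$ and that $s_{_3}$ itself is $\Z_p$-equivariant (which it is by construction). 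So the key observation here is that the operator $\tau\mapsto\overline\tau$ commutes with the $\Z_p$-action, which follows because $\overline\tau$ collects exactly the blocks of minimal size and minimality of size is preserved under relabeling of signs.

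Next I would establish the separation property: there are no $X\subseteq Y$ in $\Sigma_{_2}$ with $\lambda_{_2}(X)=(\varepsilon,i)$, $\lambda_{_2}(Y)=(\varepsilon',i)$ and $\varepsilon\neq\varepsilon'$. The hypothesis that the second coordinates agree means $\ell(\tau(X))=\ell(\tau(Y))$. The inclusion $X\subseteq Y$ should force $\tau(X)\subseteq\tau(Y)$: indeed if $X(j)^{^\varepsilon}\subseteq Y(j)^{^\varepsilon}$ for every $j$ and $\varepsilon$, then every $t$-tuple $(e_{_1},\dots,e_{_t})$ of pairwise-disjoint-into-blocks edges contributing to $E^{^\varepsilon}(X)$ also contributes to $E^{^\varepsilon}(Y)$, so the colors recorded in $\tau(X)^{^\varepsilon}$ form a subset of those in $\tau(Y)^{^\varepsilon}$. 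The plan is then to argue, exactly as in case~2 of Lemma~\ref{lambda1}, that $\ell(\tau(X))=\ell(\tau(Y))$ together with $\tau(X)\subseteq\tau(Y)$ forces $h(\tau(X))=h(\tau(Y))$ and that the set of signs achieving the minimum is identical for both, and moreover that on these minimizing signs the blocks coincide: $\tau(X)^{^\varepsilon}=\tau(Y)^{^\varepsilon}$ whenever $|\tau(X)^{^\varepsilon}|=h$. This would give $\overline{\tau(X)}=\overline{\tau(Y)}$, hence $s_{_3}(\overline{\tau(X)})=s_{_3}(\overline{\tau(Y)})$, i.e.\ $\varepsilon=\varepsilon'$, the desired contradiction.

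I expect the main obstacle to be the step showing that equal $\ell$-values plus inclusion force the minimizing blocks to be equal rather than merely to have equal cardinalities. Unwinding $\ell=p\cdot h+|\{\varepsilon:|\tau^{^\varepsilon}|>h\}|$ shows that the number of ``large'' signs must match and $h$ must match, but one must be careful that $\tau(X)^{^\varepsilon}\subseteq\tau(Y)^{^\varepsilon}$ with equal cardinality already yields equality of these finite sets, which is automatic; the genuine subtlety is ruling out the possibility that a block which is minimal-size in $\tau(X)$ grows in $\tau(Y)$ while some previously larger block shrinks to compensate, keeping $\ell$ constant. The inclusion $\tau(X)\subseteq\tau(Y)$ forbids shrinking, so each $|\tau(Y)^{^\varepsilon}|\geq|\tau(X)^{^\varepsilon}|$; combined with the matching of $h$ and of the count of large signs, this pins down the minimizing set of signs and forces equality on those signs. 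I would write this out carefully, as it is the crux, and then conclude $\overline{\tau(X)}=\overline{\tau(Y)}$ as above.
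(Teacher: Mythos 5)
Your proposal is correct and follows essentially the same route as the paper: equivariance from the equivariance of $s_{_3}$ and the compatibility of $\tau(\cdot)$ and $\overline{(\cdot)}$ with the action, then the observation that $X\subseteq Y$ gives $\tau(X)\subseteq\tau(Y)$, that equal $\ell$-values force equal $h$-values, and hence $\overline{\tau(X)}=\overline{\tau(Y)}$, contradicting $s_{_3}(\overline{\tau(X)})\neq s_{_3}(\overline{\tau(Y)})$. You actually spell out the crux step (that inclusion plus matching $h$ and matching count of non-minimal signs pins down the minimizing signs and forces blockwise equality) in more detail than the paper, which compresses it into a single sentence.
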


\begin{proof}
	Obviously, $\lambda_{_{2}}$ is a $\Z_p$-equivariant map. Suppose for a contradiction 
	that $X$ and $Y$ are two vectors in $\Sigma_{_{2}}$ such that $X \subseteq Y$, $\lambda_{_{2}}(X)=(\varepsilon,i)$ and $\lambda_{_{2}}(Y)=(\varepsilon',i)$, where $\varepsilon\neq \varepsilon'$. 
	In view of the definition of $\lambda_2$, we have $\ell(\tau(X)) = \ell(\tau(Y))=i-\alpha+p-1.$ 
	Using the definition of $\ell(-)$, we must have $h(\tau(X)) = h(\tau(Y))$. Since $\tau(X)\subseteq \tau(Y)$, it implies that 
	$\overline{\tau(X)} = \overline{\tau(Y)}$ which yields  the equality $\varepsilon=s_{_3}(\overline{\tau(X)}) = s_{_3}( \overline{\tau(Y)})=\varepsilon'$, a contradiction. 
\end{proof}

\begin{lemma}\label{lem:colorful}
If there is an $X\in \Sigma_{_{2}}$  with $\ell(\tau{(X)})\geq q$, then 
$\KG^{^p}(\HH_{_{1}})\times\cdots\times\KG^{^p}(\HH_{_{t}})$ contains a colorful, balanced, and
complete $p$-partite subhypergraph with $q$ vertices.
\end{lemma}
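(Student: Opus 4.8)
The plan is to build the required subhypergraph $\F$ explicitly from the simplex $\tau(X)$, using $\Z_p$ to index its parts. First I would record the two facts about $\tau(X)$ that the construction rests on. Since $X\in\Sigma_2$, each $E^{\varepsilon}(X)$ is nonempty, and by definition $\tau(X)^{\varepsilon}=\{(\varepsilon,c(u)):u\in E^{\varepsilon}(X)\}$, so $|\tau(X)^{\varepsilon}|$ is exactly the number of distinct colors appearing on the vertices of $E^{\varepsilon}(X)$. Writing $h=h(\tau(X))$ and $m=|\{\varepsilon:\;|\tau(X)^{\varepsilon}|>h\}|$, we have $\ell(\tau(X))=ph+m\ge q$, with $0\le m\le p-1$ since at least one sign attains the minimum.

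For each $\varepsilon\in\Z_p$ the part $V_{\varepsilon}$ will be a set of vertices selected from $E^{\varepsilon}(X)$, one per color, so each $V_{\varepsilon}$ is rainbow; this will give the colorful condition. The key structural observation is that any transversal of the parts is automatically a hyperedge of the product: if $u_{\varepsilon}=(e_1^{\varepsilon},\ldots,e_t^{\varepsilon})\in E^{\varepsilon}(X)$ then $e_j^{\varepsilon}\subseteq X(j)^{\varepsilon}$, and since the sets $X(j)^{\varepsilon}$ are pairwise disjoint over $\varepsilon$, for every $j\in[t]$ the $p$ edges $e_j^{\omega},\ldots,e_j^{\omega^p}$ of $\HH_j$ are pairwise disjoint, hence $\{e_j^{\varepsilon}\}_{\varepsilon}$ is a hyperedge of $\KG^p(\HH_j)$. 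Thus each projection $\pi_j$ of the transversal lies in $E(\KG^p(\HH_j))$, so the transversal is a hyperedge of $\KG^p(\HH_1)\times\cdots\times\KG^p(\HH_t)$. Consequently, for any choice of the $V_{\varepsilon}\subseteq E^{\varepsilon}(X)$ the resulting $\F$ is a complete $p$-partite subhypergraph, and choosing the $V_{\varepsilon}$ rainbow makes it colorful.

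What remains is to fix the sizes $s_{\varepsilon}=|V_{\varepsilon}|$ so that $\sum_{\varepsilon}s_{\varepsilon}=q$, the sizes differ by at most one (balance), and $s_{\varepsilon}\le|\tau(X)^{\varepsilon}|$ (so that a rainbow part of that size exists); I expect this counting step to be the main obstacle, and it is where the exact shape of $\ell$ is used. Writing $q=pa+b$ with $0\le b\le p-1$, balance forces $b$ parts of size $a+1$ and $p-b$ parts of size $a$. From $ph+m=\ell\ge q=pa+b$ together with $m\le p-1$ one deduces $a\le h$. If $a<h$ then $|\tau(X)^{\varepsilon}|\ge h\ge a+1$ for every $\varepsilon$, so any assignment of the sizes $a$ and $a+1$ is realizable. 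If $a=h$, the $b$ parts of size $h+1$ must be placed on signs with $|\tau(X)^{\varepsilon}|\ge h+1$, of which there are exactly $m$; and $\ell\ge q$ gives $m\ge b$, leaving enough room. In either case I select, for each $\varepsilon$, $s_{\varepsilon}$ of the colors on $E^{\varepsilon}(X)$ and one vertex per chosen color, obtaining parts $V_{\varepsilon}$ whose union is a colorful, balanced, complete $p$-partite subhypergraph on $q$ vertices (all parts nonempty once $q\ge p$).
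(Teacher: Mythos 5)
Your proof is correct and follows essentially the same route as the paper: select, for each sign $\varepsilon\in\Z_p$, a rainbow set of vertices from $E^{\varepsilon}(X)$ (one per color), and use the pairwise disjointness of the sets $X(j)^{\varepsilon}$ to see that every transversal is a hyperedge of the product. The only difference is that the paper compresses your counting step into the single assertion that $\tau(X)$ has a sub-simplex $\tau$ with $\ell(\tau)=|\tau|=q$, which you verify explicitly via the case analysis $a<h$ versus $a=h$; this is a welcome filling-in of a detail, not a different argument.
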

\begin{proof}
Let $X\in \Sigma_{_{2}}$ be a vector for which we have $\ell(\tau{(X)})\geq q$. 
Let $\tau\subseteq \tau{(X)}$ be a sub-simplex such that $\ell(\tau)=|\tau|=q$.
For each $i\in[p]$, set $S_{_{i}}=\{j\in[C]:\; (\omega^{{i}},j)\in\tau\}$. 
First note that $\lfloor{q\over p}\rfloor\leq|S_{_{i}}|\leq\lceil{q\over p}\rceil$ for each $i\in[p]$.
Moreover, it is clear that $\sum\limits_{i=1}^p|S_{_{i}}|=q$. 
For each $i\in[p]$ and $s\in S_{_{i}}$, in view of the definitions of 
$\tau{(X)}$ and $S_{_{i}}$, there is a vertex $v_{_{i,s}}=(e_{_{i,1}}^{^{s}},\ldots,e_{_{i,t}}^{^{s}})$ of 
$\KG^p(\HH_{_{1}})\times\cdots\times\KG^p(\HH_{_{t}})$ such that $c(v_{_{i,s}})=s$ and 
$e_{_{i,j}}^{^{s}}\subseteq X(j)^{{\omega^i}}$ for each $j\in[t]$. Now, for $i\in[p]$, set $V_{_{i}}=\{v_{_{i,s}}:\; s\in S_{_{i}}\}$. 
Clearly, $\KG^p(\HH_{_{1}})\times\cdots\times\KG^p(\HH_{_{t}})[V_{_{1}},\ldots,V_{_{p}}]$ contains the 
desired subhypergraph. 
\end{proof}
For completing the proof of Theorem~\ref{thm:colorful}, we need to use a generalization of the Borsuk-Ulam theorem  by  Dold, see~\cite{MR711043, MR1988723}. 
Indeed, Dold's theorem implies that if there is a simplicial $\Z_p$-map from a simplicial $\Z_p$-complex $K_{_1}$ to a free simplicial $\Z_p$-complex $K_{_2}$, then 
the dimension of $K_{_2}$ should be strictly larger than the connectivity of $K_{_1}$.

\begin{proof}[\bf Completing the proof of Theorem~\ref{thm:colorful}]
For simplicity of notation, let 
$$m=\alpha-p+1+\max\limits_{X\in \Sigma_{_{2}}}\ell(\tau{(X)}).$$
In view of Lemma~\ref{lem:colorful}, it suffices to show that 
 $$\max\limits_{X\in \Sigma_{_{2}}}\ell(\tau{(X)})\geq \min\limits_{i\in[t]}\ecd^{^p}(\HH_{_{i}}).$$
To this end, define 
$\lambda:  (\Z_p\cup\{0\})^{^{n}}\setminus \{\zero\}  \longrightarrow  \Z_p\times[m]$ 
such that for each $X\in (\Z_p\cup\{0\})^{^{n}}\setminus \{\zero\}$, if $X\in\Sigma_{_{1}}$, then $\lambda(X)=\lambda_{_{1}}(X)$,
otherwise $\lambda(X)=\lambda_{_{2}}(X)$. 
In view of Lemma~\ref{lambda1} and Lemma~\ref{lambda2},  $\lambda(-)$ is a $\Z_p$-equivariant simplicial map from $\sd(\Z_p^{*n})$ to $\Z_p^{*m}$.
Consequently, according to  Dold's theorem, the dimension of $\Z_p^{*m}$ should be strictly larger than the connectivity of $\sd(\Z_p^{*n})$, that is $m\geq n$
as desired. 
\end{proof}

\subsection{Proof of Theorem~\ref{thm:lower}}
To prove Theorem~\ref{thm:lower}, we introduce a reduction reducing this theorem to the prime case of $r$ which is known to be true by the discussion right after Theorem~\ref{thm:colorful}.
One should notice that this reduction is a refinement of the well-known reduction originally due to 
K\v{r}\'{\i}\v{z}~\cite{MR1665335,Kriz2000}, which has been used in some other papers as well, for instance see~\cite{MR3383256,HaMe16,Ziegler2002,Ziegler2006}. 
In what follows, we use a similar approach as in~\cite{HaMe16}.

\begin{lemma}\label{reduction}
Let $r'$ and $r''$ be two positive integers. If Theorem~\ref{thm:lower} holds for both $r'$ and $r''$, then it holds also for $r=r'r''$.
\end{lemma}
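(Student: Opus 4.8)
The plan is to mimic the classical K\v{r}\'{\i}\v{z} grouping argument, carried out purely combinatorially (so that no Borsuk--Ulam/Dold input is redone, the topology being already packaged into the hypotheses for $r'$ and $r''$). Fix a proper coloring $c$ of $\KG^r(\HH_1)\times\cdots\times\KG^r(\HH_t)$ with color set $[C]$, where $r=r'r''$. Since the asserted bound is equivalent to $(r-1)C\ge \min_{i}\ecd^r(\HH_i)$, it suffices to extract this inequality from the corresponding ones for $r'$ and $r''$. The guiding observation is that any family of $r=r'r''$ pairwise disjoint edges can be bundled into $r'$ groups, each a set of $r''$ pairwise disjoint edges, and conversely; this is exactly what lets a single coloring be read simultaneously at ``resolution $r'$'' and ``resolution $r''$'', and it is how the product setting of Theorem~\ref{thm:lower} is preserved under the reduction.

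Concretely, for each $i$ I would introduce an auxiliary hypergraph $\HH_i^{\ast}$ whose edges encode bundles of $r''$ pairwise disjoint edges of $\HH_i$, so that the hyperedges of $\KG^{r'}(\HH_i^{\ast})$ correspond to $r$-matchings of $\HH_i$ grouped into $r'$ bundles. From $c$ I would manufacture a coloring $c^{\ast}$ of $\KG^{r'}(\HH_1^{\ast})\times\cdots\times\KG^{r'}(\HH_t^{\ast})$ and apply the $r'$-instance of Theorem~\ref{thm:lower}, obtaining $(r'-1)\,C^{\ast}\ge \min_i\ecd^{r'}(\HH_i^{\ast})$, where $C^{\ast}$ is the palette size of $c^{\ast}$. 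The $r''$-instance then plays two roles. First, it is invoked inside each bundle: $c$ restricted to the $r''$ edges of a bundle is itself a coloring of an $r''$-fold product, and the $r''$-bound certifies that it cannot collapse, which is what keeps $c^{\ast}$ proper while controlling the growth of $C^{\ast}$ relative to $C$. Second, it feeds the comparison between $\ecd^{r'}(\HH_i^{\ast})$ and $\ecd^r(\HH_i)$. The whole bookkeeping must be arranged so that, after chaining, $\tfrac{1}{r'-1}\min_i\ecd^{r'}(\HH_i^{\ast})\ge \tfrac{1}{r-1}\min_i\ecd^r(\HH_i)$ while $C^{\ast}$ is dominated by $C$; together these give $(r-1)C\ge\min_i\ecd^r(\HH_i)$, as needed.

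The main obstacle, and the reason a naive attempt fails, is the transport of the coloring: a bundle cannot simply be recolored by a single aggregate color drawn from $[C]$, since equality of such aggregates on $r'$ bundles does \emph{not} force the $r$ underlying edges to be monochromatic. This is precisely where the $r''$-instance must enter essentially---to guarantee that within each bundle enough distinct colors occur that a monochromatic $r'$-configuration of bundles would expose a genuine monochromatic $r$-matching, contradicting properness of $c$. Making the two invocations (at resolutions $r'$ and $r''$) compose quantitatively, so that the denominator $r-1$ emerges as $(r'-1)$ times the per-bundle contribution governed by $r''$ and so that the palette stays controlled, is the delicate point; this is exactly the refinement over the plain K\v{r}\'{\i}\v{z} reduction that the paper advertises, and I expect it to be the heart of the verification.
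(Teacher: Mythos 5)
You have correctly identified the overall shape of the argument (a K\v{r}\'{\i}\v{z}-type reduction in which the $r''$-instance is applied ``inside'' each piece and the $r'$-instance to an auxiliary hypergraph), but the step you defer to ``the heart of the verification'' is precisely the content of the lemma, and the auxiliary hypergraph you propose does not support it. If $E(\HH_i^{\ast})$ consists of bundles of $r''$ pairwise disjoint edges of $\HH_i$, you face two problems that your outline does not resolve: first, as you note yourself, there is no canonical color to assign to a bundle, and the $r''$-instance of Theorem~\ref{thm:lower} gives you no handle on an individual bundle, since a bundle is a single $r''$-matching whereas the theorem is a statement about proper colorings of an entire Kneser product; second, there is no visible route from $\ecd^{r'}(\HH_i^{\ast})$ back to $\ecd^{r}(\HH_i)$ for such an $\HH_i^{\ast}$.

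The paper's construction resolves both points at once by choosing the auxiliary hypergraph differently: the edges of $\mathcal{T}_{\HH_i,C,r''}$ are the vertex subsets $A\subseteq V(\HH_i)$ with $\ecd^{r''}(\HH_i[A])>(r''-1)C$, where $C$ is the number of colors used by $c$ (note the dependence on $C$, absent from your $\HH_i^{\ast}$). For a tuple $(A_1,\ldots,A_t)$ of such edges, the $r''$-instance of Theorem~\ref{thm:lower} forces $\chi\big(\KG^{r''}(\HH_1[A_1])\times\cdots\times\KG^{r''}(\HH_t[A_t])\big)>C$, so the restriction of $c$ cannot be proper there and must admit a monochromatic hyperedge; its color is taken as the value $c^{\ast}(A_1,\ldots,A_t)$. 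A monochromatic hyperedge of $\KG^{r'}(\mathcal{T}_{\HH_1,C,r''})\times\cdots\times\KG^{r'}(\mathcal{T}_{\HH_t,C,r''})$ under $c^{\ast}$ would concatenate into a monochromatic hyperedge of the original $r$-fold product, so $c^{\ast}$ is proper and the $r'$-instance yields $\min_{i}\ecd^{r'}(\mathcal{T}_{\HH_i,C,r''})\leq (r'-1)C$. The quantitative bookkeeping you worry about is then isolated in the purely combinatorial inequality $\ecd^{r'r''}(\HH)\leq r'(r''-1)C+\ecd^{r'}(\mathcal{T}_{\HH,C,r''})$ (the lemma stated just after Lemma~\ref{reduction}), which chains with the bound above to give $(r-1)C\geq\min_{i}\ecd^{r}(\HH_i)$. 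Without this choice of auxiliary hypergraph and without that inequality, your outline cannot be completed as written.
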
 
For two positive integers $s$ and $C$ and a hypergraph $\HH$, define a new hypergraph $\mathcal{T}_{\HH,C,s}$ as follows:
$$
\begin{array}{lll}
V(\mathcal{T}_{\HH,C,s})&=&V(\HH) \\ 
E(\mathcal{T}_{\HH,C,s})&=&\Big\{A\subseteq V(\HH):\ecd^{s}(\HH[A])>(s-1)C\Big\}.
\end{array}$$

The following lemma  can be proved with a similar approach as in ~\cite[Lemma~3]{HaMe16}.
\begin{lemma}
Let $r$ and $s$ be two positive integers. Then
$$\ecd^{^{rs}}(\HH)\leq r(s-1)C+\ecd^{^r}(\mathcal{T}_{\HH,C,s})$$
\end{lemma}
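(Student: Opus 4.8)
The plan is to build, out of a good equitable $r$-coloring of $\mathcal{T}_{\HH,C,s}$ together with good equitable $s$-colorings of the resulting pieces, a single equitable $rs$-coloring of a large induced subhypergraph of $\HH$. First I would set $d=\ecd^r(\mathcal{T}_{\HH,C,s})$ and fix a subset $W\subseteq V(\HH)$ with $|V(\HH)\setminus W|=d$ such that $\mathcal{T}_{\HH,C,s}[W]$ admits a proper equitable $r$-coloring with color classes $W_{_1},\dots,W_{_r}$; thus $\big||W_{_i}|-|W_{_j}|\big|\le 1$ and no $W_{_i}$ contains a hyperedge of $\mathcal{T}_{\HH,C,s}$. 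In particular $W_{_i}\notin E(\mathcal{T}_{\HH,C,s})$, which by the definition of $\mathcal{T}_{\HH,C,s}$ means exactly that $\ecd^s(\HH[W_{_i}])\le (s-1)C$ for every $i$. Hence each block $W_{_i}$ contains an induced subhypergraph $\HH[B_{_i}]$ with $|B_{_i}|=M_{_i}\ge |W_{_i}|-(s-1)C$ that admits a proper equitable $s$-coloring.

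Next I would merge these colorings: using the $s$ color classes of each block as $s$ fresh colors produces an $rs$-coloring of $\HH\big[\bigcup_i B_{_i}\big]$. Properness is immediate, since a monochromatic hyperedge would lie inside a single block $B_{_i}$ and would therefore be a monochromatic hyperedge of $\HH[B_{_i}]$, contradicting the properness of the block coloring. Counting the discarded vertices gives $\big|V(\HH)\setminus\bigcup_i B_{_i}\big|\le d+\sum_i\big(|W_{_i}|-M_{_i}\big)\le d+r(s-1)C$, which matches the target bound; the only remaining issue is that the merged coloring need not be equitable.

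The hard part will be to enforce equitability while staying within this budget, because the blocks may have different sizes and balancing them naively can overspend. The key observation I would use is that an equitable $s$-coloring survives the deletion of any single vertex taken from a largest color class, so for each $i$ one can realize an equitable $s$-coloring of $\HH[B_{_i}']$ for \emph{every} size $|B_{_i}'|\le M_{_i}$. I would then fix the common target $q=\big\lfloor (\min_i M_{_i})/s\big\rfloor$ and trim each block down to size $\tau_{_i}=\min\{M_{_i},\,s(q+1)\}$. Since $sq\le\tau_{_i}\le s(q+1)$, the $s$ classes of the trimmed $i$-th block all have size $q$ or $q+1$, so the $rs$ global classes differ pairwise by at most one, i.e.\ the merged coloring is equitable. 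Finally a short estimate, using $|W_{_i}|\le\lceil|W|/r\rceil$ together with $\min_i M_{_i}\ge\lfloor|W|/r\rfloor-(s-1)C$, shows $|W_{_i}|-\tau_{_i}\le(s-1)C$ for each $i$, whence the total number of deleted vertices is at most $d+r(s-1)C=\ecd^r(\mathcal{T}_{\HH,C,s})+r(s-1)C$, giving $\ecd^{^{rs}}(\HH)\le r(s-1)C+\ecd^{^r}(\mathcal{T}_{\HH,C,s})$ as claimed.
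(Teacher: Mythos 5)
Your proof is correct and follows the route the paper intends: the paper only remarks that the lemma ``can be proved with a similar approach as in~[HaMe16, Lemma~3]'', namely restrict an optimal equitable $r$-coloring of $\mathcal{T}_{\HH,C,s}$ to get classes $W_{_1},\dots,W_{_r}$ with $\ecd^{s}(\HH[W_{_i}])\le (s-1)C$, pass to equitably $s$-colorable subsets $B_{_i}$, and merge into an $rs$-coloring. The one genuinely new ingredient the equitable version requires --- trimming each block to a size $\tau_{_i}$ with $sq\le\tau_{_i}\le s(q+1)$ so that all $rs$ classes have size $q$ or $q+1$, while verifying $|W_{_i}|-\tau_{_i}\le (s-1)C$ via $\lceil |W|/r\rceil-\lfloor |W|/r\rfloor\le 1$ --- is exactly what you supply, and it checks out.
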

\begin{proof}[Proof of Lemma~\ref{reduction}]
Using the previous lemma instead of Lemma~3 in the proof of Lemma~1 in~\cite{HaMe16} leads us to the proof.
\end{proof}

\section{Concluding Remarks}
Although there are hypergraphs $\HH$ for which  $\ecd^{^r}(\HH)-(|V(\HH)|-\alt^{^r}(\HH))$ is arbitrary large, one can construct hypergraphs 
$\HH$ making $(|V(\HH)|-\alt^{^r}(\HH))-\ecd^{^r}(\HH)$ arbitrary large, see~\cite{AbyAli2017}.    
Therefore, it might be interesting to have a statement similar to Theorem~\ref{thm:colorful} 
using $|V(\HH_{_i})|-\alt^{^p}(\HH_{_i})$ instead of $\ecd^{^p}(\HH_i)$. 
Note that such a statement generalizes Theorem~\ref{thm:lowerhajimeun} as well. 
To prove this statement, we need to slightly modify the proof of Theorem~\ref{thm:colorful} as follows.
\begin{itemize}
\item Throughout Section~\ref{proofthm1} , we replace  $\min\limits_{i\in[t]}\ecd^{^p}(\HH_{_{i}})$ by $\min\limits_{i\in[t]}\left(|V(\HH_{_i})|-\alt^{^p}(\HH_{_{i}})\right)$. 
\item In the definition of $\lambda_1(X)$, we use $\alt(-)$ instead of  function $\ell(-)$ to define $\nu_{_j}(X)$'s.\\ 

\item For any $X$ with $A_{_{j}}(X)\in\{\varnothing, \Z_p\}$ for each $j\in[t]$, in the definition of $\lambda_1(X)$, 
	we set $s(X)$ to be the first nonzero entry of $X$. 
\end{itemize}
With the same approach as in Section~\ref{proofthm1}, it is straightforward to check that 
Lemmas~\ref{lambda1},~\ref{lambda2}~and~\ref{lem:colorful} are still valid with the preceding modifications. 
Therefore, again applying Dold's theorem leads us to the following statement. 
\begin{theorem}\label{thm:colorfulalt}
Let $\HH_{_{1}},\ldots,\HH_{_{t}}$ be hypergraphs and $\eta=\min\limits_{i\in[t]}\left(|V(\HH_{_i})|-\alt^{^p}(\HH_{_i})\right)$ where $p$ is a prime number.
Any proper coloring of $\KG^p(\HH_{_{1}})\times\cdots\times\KG^p(\HH_{_{t}})$ contains a 
colorful,  balanced, and complete $p$-partite subhypergraph $\F$ with  
$\eta$ vertices.  
\end{theorem}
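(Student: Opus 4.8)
The plan is to reproduce the proof of Theorem~\ref{thm:colorful} almost verbatim, changing only the ingredients that involve $\ecd^p$. First I would fix, for each $j\in[t]$, a bijection $\sigma_j\colon[n_j]\to V(\HH_j)$ realizing $\alt^p(\HH_j)=\alt^p_{\sigma_j}(\HH_j)$, and identify $[n_j]$ with $V(\HH_j)$ through $\sigma_j$ so that every $\alt(-)$ below is computed in this order (this ordering matters for $\alt$, unlike for $\ell$). Then I would keep the same splitting of $(\Z_p\cup\{0\})^n\setminus\{\zero\}$ into $\Sigma_1$ and $\Sigma_2$, keep the map $\lambda_2$, and reset $\eta=\min_{i\in[t]}\bigl(|V(\HH_i)|-\alt^p(\HH_i)\bigr)$ and $\alpha=n-\eta+p-1$. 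Since $\lambda_2$, together with Lemma~\ref{lambda2} and Lemma~\ref{lem:colorful}, only refers to $\Sigma_2$, to $\tau(X)$ and to $\ell(\tau(X))$, none of which is touched by the modification, these survive unchanged; all the new content lives in $\lambda_1$.

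The two changes to $\lambda_1$ are the ones indicated in the Concluding Remarks. In the definition of $\nu_j$ I would replace $\ell(\bar X(j))$ by $\alt(\bar X(j))$, and in the branch where $A_j(X)\in\{\varnothing,\Z_p\}$ for every $j$ I would set $s(X)$ to be the first nonzero entry of $X$, discarding the auxiliary complex $L_1$ and the maps $B(-),s_1$. Before anything else I would check that $\lambda_1$ still lands in $\Z_p\times\{1,\dots,\alpha\}$. The lower bound $\nu(X)\ge1$ is immediate. For the upper bound the key observation is that whenever $A_j(X)\neq\Z_p$ one has $|A_j(X)|+\alt(\bar X(j))\le|X(j)|$: each sign $\varepsilon\in A_j(X)$ indexes a class carrying an edge, so an edge-free $\bar X(j)^\varepsilon$ must drop at least one vertex of $X(j)^\varepsilon$, whence $\sum_\varepsilon|\bar X(j)^\varepsilon|\le|X(j)|-|A_j(X)|$ and $\alt(\bar X(j))\le|\bar X(j)|\le|X(j)|-|A_j(X)|$. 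This gives $\nu_j(X)\le|X(j)|\le n_j$ for all $j$, while for one block $j^*$ with $A_{j^*}(X)\neq\Z_p$ (which exists because $X\in\Sigma_1$) the definition of $\alt^p(\HH_{j^*})$ yields the sharper $\nu_{j^*}(X)\le(p-1)+\alt^p(\HH_{j^*})\le n_{j^*}-\eta+p-1$; summing over $j$ gives $\nu(X)\le\alpha$.

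The heart of the argument is re-proving Lemma~\ref{lambda1} with the new sign map. As in the original, $\alt$ is monotone under $\subseteq$, so each $\nu_j$ is monotone, and $\nu(X)=\nu(Y)$ with $X\subseteq Y$ forces $\nu_j(X)=\nu_j(Y)$ and $A_j(X)=A_j(Y)$ for every $j$. The case where some $A_j(X)$ is a nonempty proper subset of $\Z_p$ is identical to the original, being settled by $s_2$. The work is the case $A_j(X)\in\{\varnothing,\Z_p\}$ for all $j$: on the $\Z_p$-blocks $\nu_j(X)=\nu_j(Y)$ forces $X(j)=Y(j)$, and on the $\varnothing$-blocks $\nu_j(X)=\alt(X(j))=\alt(Y(j))$. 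I then need that $X$ and $Y$ share the same first nonzero entry, and the hard part will be the underlying combinatorial fact: if $X(j)\subseteq Y(j)$ and $\alt(X(j))=\alt(Y(j))$, then $X(j)$ and $Y(j)$ have the same first nonzero coordinate. This holds because a longest alternating subsequence may always be taken to begin at the first nonzero coordinate; hence if $Y(j)$ had an earlier first nonzero of a different sign, prepending it would lengthen a longest alternating subsequence of $X(j)$, contradicting the equality of alternation numbers, while if that earlier entry had the same sign the first sign is unchanged. Locating the first block that $Y$ (equivalently $X$) meets and applying this fact there gives $s(X)=s(Y)$. This is precisely where $\alt$ behaves better than $\ell$: enlarging an oversized class leaves $\ell$ fixed while changing the first nonzero entry, which is why Theorem~\ref{thm:colorful} needed the maps $B(-)$ and $s_1$, whereas the monotone-and-first-nonzero invariant suffices for $\alt$.

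Finally I would assemble $\lambda_1$ and $\lambda_2$ into $\lambda\colon\sd(\Z_p^{*n})\to\Z_p^{*m}$ with $m=\alpha-p+1+\max_{X\in\Sigma_2}\ell(\tau(X))$, exactly as before: the two blocks of second coordinates are disjoint, a chain cannot pass from $\Sigma_2$ to $\Sigma_1$ because $A_j$ is monotone, and Lemmas~\ref{lambda1} and~\ref{lambda2} rule out sign conflicts within each part, so $\lambda$ is a $\Z_p$-equivariant simplicial map. Dold's theorem then forces $m\ge n$, i.e.\ $\max_{X\in\Sigma_2}\ell(\tau(X))\ge\eta$, and Lemma~\ref{lem:colorful} produces the colorful, balanced, complete $p$-partite subhypergraph on $\eta$ vertices. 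I expect the only genuinely new step to be the first-nonzero lemma for $\alt$ inside Lemma~\ref{lambda1}; the range estimate and the Dold step are routine adaptations of the $\ecd^p$ case.
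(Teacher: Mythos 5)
Your proposal is correct and follows exactly the route the paper itself takes: it implements the three modifications listed in the Concluding Remarks (replace $\ecd^p$ by $|V|-\alt^p$, replace $\ell(-)$ by $\alt(-)$ in $\nu_j$, and take $s(X)$ to be the first nonzero entry on the all-$\{\varnothing,\Z_p\}$ branch) and then reruns the $\lambda_1,\lambda_2$, Dold argument. The details you supply — fixing optimal orderings $\sigma_j$, the range bound $\nu(X)\le\alpha$, and the first-nonzero-entry lemma for $\alt$ — are precisely the verifications the paper declares ``straightforward to check,'' and they are carried out correctly.
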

Also, the question of whether Theorem~\ref{thm:colorful} and Theorem~\ref{thm:colorfulalt} hold for  an arbitrary positive integer $r$ instead of a prime number $p$  is interesting.

\bibliographystyle{plain}
\def\cprime{$'$} \def\cprime{$'$}

\end{document}